\documentclass[10pt]{amsart}\oddsidemargin -1mm \evensidemargin -1mm \topmargin -15mm\headheight5mm \headsep 3.5mm \textheight 245mm\textwidth 170mm
\usepackage{amsfonts, amssymb, amsthm, amsmath, euscript, hhline}
\theoremstyle{definition}\newtheorem{theorem}{Theorem}[section]\newtheorem{lemma}[theorem]{Lemma}\newtheorem{remark}[theorem]{Remark}\newtheorem{proposition}[theorem]{Proposition}\newtheorem{corollary}[theorem]{Corollary}\newtheorem{definition}[theorem]{Definition}\newtheorem{Ex}[theorem]{Example}\newtheorem{nota}[theorem]{Notation}
\begin{document}
\title[Primitive ideals of ${\rm U}(\frak{sl}(\infty))$ and the Robinson-Schensted algorithm at infinity]{Primitive ideals of ${\rm U}(\frak{sl}(\infty))$\\ and\\ the Robinson-Schensted algorithm at infinity}
\author{Ivan Penkov, Alexey Petukhov}
\address{Ivan Penkov: Jacobs University Bremen, Campus Ring 1, D-28759, Bremen, Germany}
\email{i.penkov@jacobs-university.de}
\address{Alexey Petukhov: Institute for Information Transmission Problems, Bolshoy Karetniy 19-1, Moscow 127994, Russia}
\email{alex-{}-2@yandex.ru}
\maketitle
\begin{center}
  {\it To Anthony Joseph on the occasion of his 75th birthday}
\end{center}
\begin{abstract} We present an algorithm which computes the annihilator in ${\rm U}(\frak{sl}(\infty))$ of any simple highest weight $\frak{sl}(\infty)$-module $L_\frak b(\lambda)$. 
This algorithm is based on an infinite version of the Robinson-Schensted algorithm.

{\bf Key words:} Lie algebra $\frak{sl}(\infty)$, primitive ideals, highest weight modules, Robinson-Schensted algorithm.

{\bf AMS Classification 2010:} 17B65, 05E10, 16D60.
\end{abstract}

\section{Background results}\label{Sback}

The description of primitive ideals of the enveloping algebra ${\rm U}(\frak{sl}(n))$ for $n\ge2$ is nowadays a classical result. 
Duflo's Theorem, applied to $\frak{sl}(n)$, claims that, for every fixed Borel subalgebra $\frak b\subset\frak{sl}(n)$, any primitive ideal of ${\rm U}(\frak{sl}(n))$ is the annihilator of a simple $\frak b$-highest weight $\frak{sl}(n)$-module. 
Since (by a well-known generalization of Schur's Lemma) any primitive ideal intersects the centre ${\rm Z}(\frak{sl}(n))$ of ${\rm U}(\frak{sl}(n))$ in a maximal ideal of ${\rm Z}(\frak{sl}(n))$, and since there are only finitely many nonisomorphic simple $\frak b$-highest weight modules with fixed action of ${\rm Z}(\frak{sl}(n))$, Duflo's theorem reduces the problem of classifying primitive ideals to a finite problem. 
Indeed, the Weyl group $S_n$ of $\frak{sl}(n)$ surjects to the set of primitive ideals $I$ with fixed intersection $I\cap{\rm Z}(\frak g)$, and the problem of describing the primitive ideals of ${\rm U}(\frak{sl}(n))$  is equivalent to the problem of describing the fibres of this surjection.

It was Anthony Joseph who solved this latter problem by reducing it to the Robinson-Schensted algorithm.

The purpose of our current paper is to establish a combinatorial counterpart of Joseph's result for the infinite-dimensional Lie algebra $\frak{sl}(\infty)$. 
More precisely, we provide an algorithm for computing the primitive ideal of any simple highest weight $\frak{sl}(\infty)$-module. 
This algorithm is our proposed ``Robinson-Schensted algorithm at infinity''.

We start with a brief survey of previous results on the primitive ideals of ${\rm U}(\frak g_\infty)$ for direct limit Lie algebras $\frak g_\infty$, putting in this way the current paper into context. 

The Lie algebra $\frak{sl}(\infty)$ is defined as the direct limit of an arbitrary chain of embeddings
$$\frak{sl}(2)\hookrightarrow\frak{sl}(3)\hookrightarrow\frak{sl}(4)\hookrightarrow...$$
More generally, one may consider an arbitrary chain of embeddings of simple Lie algebras
\begin{equation}\frak g_1\hookrightarrow\frak g_2\hookrightarrow...\hookrightarrow\frak g_n\hookrightarrow\frak g_{n+1}\hookrightarrow...\label{Ech}\end{equation}
and its direct limit $\frak g_\infty=\varinjlim \frak g_n$. 

An embedding $\frak g_i\hookrightarrow\frak g_{i+1}$ as in~(\ref{Ech}) is {\it diagonal} if the branching rule for the natural $\frak g_{i+1}$-modules (the nontrivial simple $\frak g_{i+1}$-modules of minimal dimension) involves only natural and trivial modules over $\frak g_i$. The direct limits of chains of diagonal embeddings are known as {\it diagonal Lie algebras} and are classified by A. Baranov and A. Zhilinskii~\cite{BZh}. 
Furthermore, diagonal Lie algebras can be split into {\it nonfinitary diagonal} Lie algebras and {\it finitary} Lie algebras, the latter being (up to isomorphism) just three Lie algebras: $\frak{sl}(\infty), \frak o(\infty), \frak{sp}(\infty)$. 
The finitary  Lie algebras $\frak g_\infty$ are defined as the direct limits of chains (\ref{Ech}) where $\frak g_n=\frak{sl}(n+1)$, $\frak o(n),$ $\frak{sp}(2n)$, respectively. 

The classification problem for nondiagonal Lie algebras $\frak g_\infty$ appears to be wild. Nevertheless, one can make the following strong statement about primitive ideals in ${\rm U}(\frak g_\infty)$:

{\it If $\frak g_\infty$ is nondiagonal, i.e., there are infinitely many nondiagonal embeddings in the chain~(\ref{Ech}), the only proper two-sided ideals in ${\rm U}(\frak g_\infty)$ are the augmentation ideal and the zero ideal. }

This statement is known as Baranov's conjecture and is proved in~\cite{PP1}. 

For nonfinitary diagonal Lie algebras $\frak g_\infty$, a classification of two-sided ideals is obtained by A. Zhilinskii~\cite{Zh3}. 
Here there are two-sided ideals $I$ different from the augmentation ideal, however a characteristic feature of this case is that all quotients ${\rm U}(\frak g_\infty)/I$ are locally finite dimensional. 
By definition, this means that the quotients ${\rm U}(\frak g_n)/({\rm U}(\frak g_n)\cap I)$ are finite dimensional. 
A similar result has been established in the recent paper~\cite{PeS} also for the Witt Lie algebra (which is not a direct limit of finite-dimensional Lie algebras), and this leads us to the thought that the above results might extend to a larger class  of infinite-dimensional Lie algebras. 
That could be a subject of future research.

None of the above results apply to the three finitary Lie algebras $\frak{sl}(\infty), \frak{o}(\infty), \frak{sp}(\infty)$.
The problem of classifying primitive ideals in the enveloping algebras ${\rm U}(\frak{sl}(\infty))$, ${\rm U}(\frak{o}(\infty))$ and ${\rm U}(\frak{sp}(\infty))$ has been open for some time, and was recently solved in~\cite{PP5} for ${\rm U}(\frak{sl}(\infty))$. 
Here is a brief history of the problem. 
It was posed by A. Zalesskii, who saw it as a problem analogous to classifying primitive (and two-sided) ideals in the group algebra of $S_\infty$. 
Indeed, the latter problem admits a relatively straightforward combinatorial solution, and suggests a method for constructing primitive ideals of ${\rm U}(\frak{sl}(\infty))$. 
One considers {\it coherent local systems of simple $\frak{sl}(n)$-modules}: such coherent local systems, {\it c.l.s.} for short, consist of nonempty sets of isomorphism classes $[L_n^\alpha]$ of simple finite-dimensional $\frak{sl}(n)$-modules $L_n^\alpha$ for each $n\ge2$, such that each $\frak{sl}(n)$-module $L_n^{\alpha_0}$ branches over $\frak{sl}(n-1)$ as a sum of $\frak{sl}(n-1)$-modules among $L_{n-1}^{\alpha}$, and every $L_{n-1}^\alpha$ arises from a suitable $L_n^{\alpha_0}$. 
The joint annihilator in ${\rm U}(\frak{sl}(\infty))$ of such a c.l.s. (i.e. the union over $n$ of all joint annihilators $\cap_\alpha{\rm Ann}_{{\rm U}(\frak{sl}(n))}L_n^\alpha$ is a two-sided ideal of ${\rm U}(\frak{sl}(\infty))$. 
Furthermore, one can prove that if a c.l.s. is {\it irreducible}, i.e., is not a proper union of two sub-c.l.s., then its annihilator is a primitive ideal. 
As an important step in Zalesskii's program, A. Zhilinskii classified all c.l.s. (and, in particular, irreducible c.l.s.). 
Unfortunately, Zhilinskii's work is not widely available as his main paper~\cite{Zh3} is a preprint in Russian. 
We have given a summary of Zhilinskii's classification of c.l.s. in our survey paper~\cite{PP3}, see also~\cite{PP1}.

As a next step, we determined in~\cite{PP1} which simple c.l.s. have the same annihilator, and completed in this way the classification of primitive ideals of ${\rm U}(\frak{sl}(\infty))$ arising from c.l.s.. 
We call these ideals {\it integrable} primitive ideals (an equivalent definition is given in~\cite{PP1},~\cite{PP3}).

The next step was made in our work~\cite{PP5} where we proved  that any primitive ideal of ${\rm U}(\frak{sl}(\infty))$ is integrable, providing finally a classification of primitive ideals of ${\rm U}(\frak{sl}(\infty))$. 
The proof is based on three pillars:
our study of associated pro-varieties of primitive ideals in~\cite{PP1}, 
Joseph's original classification of primitive ideals in ${\rm U}(\frak{sl}(n))$,
and certain new combinatorial facts relating ``precoherent local systems'' of representations of $\frak{sl}(n)$ for $n\ge1$ to coherent local systems introduced above. 
These latter facts use heavily the Gelfand-Tsetlin branching rule.

The final result is as follows:

{\it Primitive ideals of ${\rm U}(\frak{sl}(\infty))$ are naturally parametrized by quadruples
$$(r, g, X, Y)$$

where $r, g$ are nonnegative integers and $X, Y$ are Young diagrams.}

The integer $r$ is the {\it rank} and represents the associated pro-variety of a primitive ideal, see~\cite{PP1}. 
The integer $g$ is the {\it Grassmann number}. 
We call it so as it arises naturally from direct limits of exterior powers of defining $\frak{sl}(n)$-modules, i.e., of direct limits of fundamental $\frak{sl}(n)$-modules. 
More precisely, a {\it semiinfinite fudamental $\frak{sl}(\infty)$-module} is a direct limit of fundamental $\frak{sl}(n)$-modules whose degrees and codegrees both tend to infinity (there are uncountably many nonisomorphic semiinfinite modules), see~\cite{GP}. 
The annihilators of all semiinfinite fundamental $\frak{sl}(\infty)$-modules coincide, and the corresponding ideal is labeled by $(0, 1, \emptyset, \emptyset)$. 

Finally, the Young diagrams ${X}, {Y}$ also arise in a staighforward manner: the primitive ideal with coordinates $(0, 0, {X}, {Y})$ is the annihilator of the simple tensor module $V_{{X}, {Y}}$; 
this module is defined as the socle  of the tensor product $S_{{X}}(V)\otimes S_{{Y}}(V_*)$ where $V$ and $V_*$ are the two defining representations of $\frak{sl}(\infty)$ (finitary column vectors and finitary row vectors) and $S_Z(\cdot)$ is the Schur functor associated to a Young diagram $Z$, see~\cite{DPSn}.

An essential difference with the case of $\frak{sl}(n)$ is that the annihilator in ${\rm U}(\frak{sl}(\infty))$ of most simple $\frak{sl}(\infty)$-modules is equal to zero. 
Therefore one can think of simple $\frak{sl}(\infty)$-modules with nonzero annihilators as {\it small}. 
Examples of small simple modules are the above mentioned modules $V_{{X}, {Y}}$, semiinfinite fundamental representations, and also direct limits of growing symmetric powers of defining representations of $\frak{sl}(n)$ for $n\to\infty$. 
A small simple $\frak{sl}(\infty)$-module does not need to be integrable, i.e., does not need to be a direct limit of finite-dimensional $\frak{sl}(n)$-modules for $n\to\infty$. 
For instance, in~\cite{GP} it is shown that any simple weight $\frak{sl}(\infty)$-module with bounded weight multiplicities is small. 
However, our classification of primitive ideals implies that the annihilator of any small simple $\frak{sl}(\infty)$-module is also the annihilator of a, possibly nonisomorphic, simple integrable $\frak{sl}(\infty)$-module. 
This is a  truly infinite-dimensional effect.

\section{Our goal in the present paper}

We are now ready to explain the purpose of the paper. 
Despite the fact that primitive ideals of ${\rm U}(\frak{sl}(\infty))$ are classified, the existing literature does not explain how to compute the annihilator of an arbitrary simple highest weight module $L_\frak b(\lambda)$, i.e.,  how to find the quadruple $(r, g, {X}, {Y})$ corresponding to the ideal ${\rm Ann}_{{\rm U}(\frak{sl}(\infty))}L_\frak b(\lambda)$, for a given splitting Borel subalgebra $\frak b\subset\frak{sl}(\infty)$ and a character $\lambda$ of $\frak b$. 
Solving this problem is our aim in the present work. 
In the case of $\frak{sl}(n)$, the analogous problem is solved by applying the Robinson-Schensted algorithm to the weight $\lambda+\rho$, and below we present the corresponding ``infinite version'' of this algorithm. 

In the work~\cite{PP2} we have established an important preliminary result: we have found a necessary and sufficient condition on the pair $(\frak b, \lambda)$ for the annihilator ${\rm Ann}_{{\rm U}(\frak{sl}(\infty))}L_\frak b(\lambda)$ to be nonzero. 
Recall that a splitting Borel subalgebra containing a fixed splitting Cartan subalgebra (for instance, the diagonal matrices in $\frak{sl}(\infty)$) is given by an arbitrary total order $\prec$ on a countable set, see~\cite{DP}. 
We denote this set by $\Theta$. 
Theorem 3.1 in~\cite{PP2} asserts that ${\rm Ann}_{{\rm U}(\frak{sl}(\infty))}L_\frak b(\lambda)\ne0$ if and only if $\Theta$ can be split as a finite disjoint union
$$\Theta=\Theta_1\sqcup...\sqcup \Theta_k$$
such that $i\prec j$ for any pair $i\in \Theta_s, j\in \Theta_t$ with $s<t$, and the restriction of $\lambda$ to $\Theta_s$ is a  constant $\lambda(s)$ for any $s<k$, satisfying $\lambda(s)-\lambda(t)\in\mathbb Z$ if both $\Theta_s, \Theta_t$ are infinite. 

The above makes it clear that in order to compute ${\rm Ann}_{{\rm U}(\frak{sl}(\infty))}L_\frak b(\lambda)$ we need to provide an algorithm which transforms a given pair $(\frak b, \lambda)$, where $\frak b$ is a splitting Borel subalgebra and $\lambda$ is a weight such that ${\rm Ann}_{{\rm U}(\frak{sl}(\infty))} L_\frak b(\lambda)\ne 0,$ to the quadruple corresponding to the primitive ideal $(r, g, {X}, {Y})$ of ${\rm Ann}_{{\rm U}(\frak{sl}(\infty))}L_\frak b(\lambda)$. 
This is precisely what we do: we construct a version of the Robinson-Schensted algorithm which performs the above task.

{\bf Acknowledgements.} 
We thank Professors Maria Gorelik and Anna Melnikov for inviting us to participate and present the results of this paper at the sister conferences in Israel celebrating Anthony~Joseph's 75th birthday. 
We are grateful to Professor Anthony Joseph for his kind attention to our work. 
Both authors have been supported  in part by DFG grant PE 980/6-1, and the second author has been supported in part by~RFBR grant 16-01-00818.

\section{Preliminaries}
\subsection{Robinson-Schensted algorithm and $\frak{sl}(n)$}\label{Sprsl}
\subsubsection{Notation}
We fix an algebraically closed field $\mathbb F$ of characteristic 0. 
If $V$ is a vector space over $\mathbb F$, we set $V^*={\rm Hom}_{\mathbb F}(V, \mathbb F)$. 
All ideals in associative $\mathbb F$-algebras are assumed to be two-sided. 
We use the notions of Young diagrams and partitions as synonyms; when writing a Young diagram as a partition $(p_1\ge p_2\ge...\ge p_n>0)$, the integers $p_i$ are the row lengths of the diagram. 

We identify $\frak{sl}(n)$ with the set of traceless $n\times n$-matrices. 
The elementary matrices $$e_{i, j}\hspace{10pt}{\rm~for~}1\le i\ne j\le n,\hspace{10pt}e_{i, i}-e_{i+1, i+1}\hspace{10pt}{\rm~for~}1\le i\le n-1$$ 
form a basis of $\frak{sl}(n)$. 
We fix the Cartan subalgebra $\frak h_n$ of diagonal matrices and the Borel subalgebra $\frak b_n$ of upper triangular matrices. 
To any linear function $\lambda\in\frak h_n^*$ we attach the linear map
$$\lambda': \frak b_n\to\mathbb F,\hspace{10pt}e_{ij}\mapsto 0{\rm~for~}i\ne j,{\rm~and~}\lambda'|_{\frak h_n}=\lambda.$$
We denote by $\mathbb F_\lambda$ the one-dimensional $\frak b_n$-module defined by $\lambda'$. 
Set
$$M(\lambda):={\rm U}(\frak{sl}(n))\otimes_{{\rm U}(\frak b_n)}\mathbb F_\lambda.$$
Let $L(\lambda)$ be the unique simple quotient of $M(\lambda)$, and
$$I(\lambda):={\rm Ann} L(\lambda).$$

We identify the vector space $\mathbb F^n$ with the space of functions
$$f: \{1,..., n\}\to \mathbb F.$$
For any function $f\in\mathbb F^n$ there exists a unique $\lambda_f\in\frak h_n^*$ such that
$$\lambda_f(e_{ii}-e_{jj})=f(i)-f(j).$$
Therefore to any function $f\in\mathbb F^n$ we can attach the primitive ideal
$$I(f):=I(\lambda_f)\subset{\rm U}(\frak{sl}(n)).$$
The Weyl group $W_n$ of the pair $(\frak{sl}(n), \frak h_n)$ is the symmetric group $S_n$, and its action on $\frak h_n^*$ is induced by its action on $\mathbb F^n$ via permutations. % of the set $\{1,..., n\}$. 
The shifted action of $S_n$ on $\mathbb F^n$, denoted by $\sigma\cdot f$, is defined as $$\sigma\cdot f:=\sigma(f+\rho_n)-\rho_n$$ where $\rho_n:=(-1, -2,..., -n)$.  
\subsubsection{Joseph's description of primitive ideals} Let ${\rm Prim}{\rm U}(\frak g)$ be the set of primitive ideals of ${\rm U}(\frak g)$. % endowed with the Jacobson topology. 
Duflo's Theorem implies that the map
$$\psi:\mathbb F^n\to{\rm Prim} {\rm U}(\frak g), \hspace{10pt}f\mapsto I(f)$$
is surjective. 
A description of ${\rm Prim}{\rm U}(\frak g)$, based on the description of the fibres of $\psi$, is due to Joseph~\cite{Jo}, see also~\cite{BJ, BV1, Jo, Jo2}.

As a first step of this description, one attaches to $f\in\mathbb F^n$ a subgroup $W_n(f)\subset W_n$ called {\it the integral Weyl subgroup of $f$}. 
The subgroup $W_n(f)$ is a parabolic subgroup of $S_n$, and therefore is a product of permutation groups.  
As a second step, one defines an element $w(f)\in W(\lambda)$. 
In the regular case, this element $w(f)$ produces $f$ from its dominant representative. 
For the singular case we refer the reader to~\cite{BJ}. 
The third step consists of applying the Robinson-Schensted algorithm to each factor of the element $w(f)$ with respect to the decomposition of $W_n(f)$ as a direct product of symmetric groups. 
For each factor of $w(f)$ this algorithm produces a pair of semistandard  Young tableaux called {\it recording} tableau and {\it insertion} tableau. 

The original result of Joseph~\cite{Jo1} claims that $\psi(f_1)=\psi(f_2)$ if and only if

1) $f_1$ and $f_2$ define the same character of ${\rm Z}(\frak{sl}(n))$, i.e., there exists $k\in\mathbb F$ and a permutation $\sigma\in S_n$ such that $\sigma\cdot f_1=f_2+k$,

2) the recording tableau of each factor of $w(f_1)\in W_n(f_1)$ coincides with the recording tableau of the corresponding factor of $w(f_2+k)$ under $\sigma$.

For the purpose of considering the limit $n\to\infty$, it is convenient to restate Joseph's result in terms of $f$ only without referring to $w(f)$. 
We do this in Theorem~\ref{Tjo} below. 
\subsubsection{Admissible interchanges}\label{SSkm} For $a, b\in\mathbb F$ we write $a>_{\mathbb Z}b$ whenever $a-b\in\mathbb Z_{>0}$. 
The notations $a<_\mathbb Zb, a\ge_{\mathbb Z}b, a\le_\mathbb Zb$ have similar meaning.

Let $f_1, f_2\in\mathbb F^n$. 
We say that $f_1$ and $f_2$ are {\it connected by the $i$th admissible interchange} if
$$f_1(j)=f_2(j), j\ne i, i+1,\hspace{10pt} f_1(i)=f_2(i+1), f_1(i+1)=f_2(i), 1\le i\le n-1,$$
and one of the following conditions is satisfied:

1) $f_1(i+1)-f_1(i)\not\in\mathbb Z,$

2) $i\le n-2$ and $f_1(i+1)>_{\mathbb Z} f_1(i+2)\ge_{\mathbb Z} f_1(i)$,

2$'$) $i\le n-2$ and $f_1(i)>_{\mathbb Z} f_1(i+2)\ge_{\mathbb Z} f_1(i+1)$,

3) $i\ge 2$ and $f_1(i+1)\ge_{\mathbb Z} f_1(i-1)>_{\mathbb Z} f_1(i)$,

3$'$) $i\ge 2$ and $f_1(i)\ge_{\mathbb Z} f_1(i-1)>_{\mathbb Z} f_1(i+1)$.\\
It can be easily checked that $f_1$ is connected with $f_2$ by the $i$th admissible interchange if and only if $f_2$ is connected with $f_1$ by the $i$th admissible interchange. 
These admissible interchanges are known in the context of the Robinson-Schensted algorithm, see Theorem~\ref{Tjo} below. 

We say that $f_1$ and $f_2$ are connected by the {\it shifted $i$th admissible interchange} if the sequences $f_1+\rho_n{\rm~and~}f_2+\rho_n$ are connected by the $i$th admissible interchange. 
\subsubsection{A version of Robinson-Schensted algorithm for finite sequences}\label{SSrskf}
The Robinson-Schensted algoithm is a classical object of 20th century mathematics and has different versions.
As a reference for ``the standard algorithm'' we use~\cite{Knu}. 
This algorithm works with a finite sequence of nonrepeating integers,  however we note that one can apply the standard algorithm to any nonrepeating finite sequence of elements of a totally ordered set $(\mathcal S, \prec)$.  
The output of this procedure consists of a Young tableau filled by elements of $\mathcal S$ (recording tableau) and a Young tableau of the same shape filled by positive integers (insertion tableau). 
The recording tableau is standard with respect to $\prec$ and the insertion tableau is standard with respect to $<$. 

If a sequence consists of elements of several distinct totally ordered sets $(\mathcal S_i, \prec_i)$, we can split the sequence into subsequences of elements of $\mathcal S_i$ (one for each set) and apply the algorithm separately to such sequences. 
The output consists of a collection of pairs of tableaux -- one pair per set $\mathcal S_i$. 

In our case, the totally ordered sets $(\mathcal S_i, \prec)$ will be of the form $(a+\mathbb Z)\times\mathbb Z$, $a\in\mathbb F$, with the order 
\begin{equation}(a, i)\prec (b, j)\iff[(a>_\mathbb Zb){\rm~or~}(a=b, i>j)].\label{Eord}\end{equation}

Let $f_1,..., f_n\in \mathbb F$ be a finite sequence. We attach to $f_1,..., f_n$ the sequence
\begin{equation}\label{Erho}(f_1, 1),..., (f_n, n),\end{equation}
and split~(\ref{Erho}) into totally ordered subsets of $(a+\mathbb Z)\times\mathbb Z$ as above. 
We then apply the standard Robinson-Schensted algorithm to (\ref{Erho}). 
The output consists of a collection of pairs of tableaux. 
The recording tableau in a pair is filled by $\{(f_i, i)\}_{1\le i\le n}$ and the insertion tableau is filled by $1,..., n$. 
As a last step we replace the pairs $(f_i, i)$ in all recording tableaux by $f_i$ and discard all insertion tableaux. 
The resulting tableaux have strictly decreasing rows and nonincreasing columns (the corner of a tableau being in the upper-left position). 
This is a consequence of the inequality inversion in the left and right-hand sides of formula~(\ref{Eord}).

In what follows, by {\it RS-algorithm}, we mean the above procedure. 
We denote by $RS(f_1,..., f_n)$ its output. 
We set also $$J(f_1,..., f_n):=RS(f_1-1,..., f_n-n);$$
$J(f_1,..., f_n)$ reflects the shift of $f_1,..., f_n$ by ``$\rho$''. 
\begin{Ex}Consider the sequence $3, 4, 4, \alpha,$ where $\alpha\notin\mathbb Z$. We have
$$J(3, 4, \alpha, 5)=RS(2, 2, \alpha-3, 1).$$
Next, we attach to the sequence $2, 2, \alpha-3, 1$ the sequence
\begin{equation}(2, 1), (2, 2), (\alpha-3, 3), (1, 4)\label{Eprers}\end{equation}
of elements of $\mathbb F\times\mathbb Z$. We have
\begin{equation}(2, 2)\prec (2, 1)\prec (1, 4),\label{E222114}\end{equation}
and the element $(\alpha-3, 3)$ is incomparable with the elements of~(\ref{E222114}). 
We apply the RS-algorithm to the sequence~(\ref{Eprers}) step-by-step from left to right:
$$(2, 1)\mapsto\{(\begin{array}{c}
\hline\multicolumn{1}{|c|}{(2, 1)}\\
\hhline{-}
\end{array}, \begin{array}{c}
\hline\multicolumn{1}{|c|}{1}\\
\hhline{-}
\end{array})\},\hspace{10pt}
((2, 1), (2, 2))\mapsto
\{(\begin{array}{c}
\hline\multicolumn{1}{|c|}{(2, 2)}\\
\hline\multicolumn{1}{|c|}{(2, 1)}\\
\hhline{-}
\end{array},
\begin{array}{c}
\hline\multicolumn{1}{|c|}{2}\\
\hline\multicolumn{1}{|c|}{1}\\
\hhline{-}
\end{array})
\},$$
$$((2, 1), (2, 2), (\alpha-3, 3))\mapsto
\{(\begin{array}{cc}
\hline\multicolumn{1}{|c|}{(2, 2)}\\
\hline\multicolumn{1}{|c|}{(2, 1)}\\
\hhline{-}
\end{array},
\begin{array}{cc}
\hline\multicolumn{1}{|c|}{2}\\
\hline\multicolumn{1}{|c|}{1}\\
\hhline{-}
\end{array}),  
(\begin{array}{c}
\hline\multicolumn{1}{|c|}{(\alpha-3, 3)}\\
\hhline{-}
\end{array},
\begin{array}{c}
\hline\multicolumn{1}{|c|}{3}\\
\hhline{-}
\end{array})
\},
$$
$$((2, 1), (2, 2), (\alpha-3, 3), (1, 4))\mapsto\{(\begin{array}{cc}
\hline\multicolumn{1}{|c|}{(2, 2)}&\multicolumn{1}{|c|}{(1, 4)}\\
\hline\multicolumn{1}{|c|}{(2, 1)}&\\
\hhline{-~}
\end{array}, 
\begin{array}{cc}
\hline\multicolumn{1}{|c|}{2}&\multicolumn{1}{|c|}{4}\\
\hline\multicolumn{1}{|c|}{1}&\\
\hhline{-~}
\end{array}), 
(\begin{array}{c}
\hline\multicolumn{1}{|c|}{(\alpha-3, 3)}\\
\hhline{-}
\end{array}, 
\begin{array}{c}
\hline\multicolumn{1}{|c|}{3}\\
\hhline{-}
\end{array})\}.
$$
The result is
$$J(3, 4, \alpha, 5)=\{\begin{array}{cc}
\hline\multicolumn{1}{|c|}{2}&\multicolumn{1}{|c|}{1}\\
\hline\multicolumn{1}{|c|}{2}&\\
\hhline{-~}
\end{array}, \begin{array}{c}
\hline\multicolumn{1}{|c|}{\alpha-3}\\
\hhline{-}
\end{array}\}.
$$
%Finally,
%$${\rm seq}(J(3, 4, \alpha, 5))=(2, 2, 1, \alpha-3).$$
\end{Ex}
\begin{theorem}[{An~equivalent~form~of~Joseph's~theorem}]\label{Tjo}The following conditions are equivalent for sequences $f_1,..., f_n\in\mathbb F$, $f_1',..., f_n'\in\mathbb F$:

1) $I(f_1,..., f_n)=I(f_1',..., f_n')$,

2) $\exists k\in\mathbb F: J(f_1,..., f_n)=J(f_1'+k,..., f_n'+k)$,

3) there exists $k\in\mathbb F$ so that the sequences $$f_1,..., f_n{\rm~and~}f_1'+k,..., f_n'+k$$ are connected by a series of shifted admissible interchanges.\end{theorem}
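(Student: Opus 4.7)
The plan is to deduce all three equivalences from Joseph's original theorem, recalled immediately above, by translating its data---the integral Weyl subgroup $W_n(f)$, the element $w(f)$, and the recording tableaux of its factors---into the language of the $J$-tableau of Section~\ref{SSrskf} and of the shifted admissible interchanges of Section~\ref{SSkm}.

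First I would establish $1)\Leftrightarrow 2)$. Three identifications are needed. Since $\lambda_f=\lambda_{f+k}$ for every constant $k\in\mathbb F$, condition 1) of Joseph's criterion is equivalent to the existence of a scalar $k$ and a permutation $\sigma$ with $\sigma\cdot f_1=f_2+k$. Next, the integral Weyl subgroup $W_n(f)$ is the parabolic subgroup of $S_n$ stabilising the partition of $\{1,\dots,n\}$ into the fibres of $i\mapsto f(i)+\mathbb Z$, and this partition is precisely the splitting of the indexed sequence $(f_i,i)_{1\le i\le n}$ into the subsequences lying in each totally ordered set $(a+\mathbb Z)\times\mathbb Z$ used in the definition of $RS$. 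Finally, the recording tableau of each factor of Joseph's $w(f)$ agrees, after accounting for the inverted order~\eqref{Eord} and the $\rho_n$-shift $f_i\mapsto f_i-i$, with the corresponding tableau of $J(f_1,\dots,f_n)$. Combining these three identifications turns Joseph's criterion into condition $2)$.

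Next I would show $2)\Leftrightarrow 3)$, which is a purely combinatorial statement about the RS-algorithm. Two nonrepeating sequences in a totally ordered set produce the same insertion (that is, the author's ``recording'') tableau if and only if they are Knuth equivalent, and Knuth equivalence is generated by the elementary moves $yxz\sim yzx$ (when $x<y\le z$) and $xzy\sim zxy$ (when $x\le y<z$) on consecutive triples. One checks that each of the five cases of an admissible interchange is either such an elementary Knuth move within a single totally ordered subsequence---cases $2),2')$ yield $xzy\sim zxy$ and cases $3),3')$ yield $yxz\sim yzx$, depending on the relative order of the swapped pair---or an interchange of $\prec$-incomparable adjacent entries lying in different subsequences, which trivially leaves the output of $RS$ unchanged (case $1)$). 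Since $J(f_1,\dots,f_n)=RS(f_1-1,\dots,f_n-n)$, shifted admissible interchanges on $f$ correspond exactly to admissible interchanges on $f+\rho_n$, and the classical theorem of Knuth then yields $2)\Leftrightarrow 3)$.

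The main obstacle, I expect, will be the bookkeeping in the third identification of step $1)\Leftrightarrow 2)$, especially in the singular case where the definition of $w(f)$ (following~\cite{BJ}) is more delicate. One must verify that any admissible choice of $w(f)$ produces, on each factor of $W_n(f)$, the same recording tableau as the $RS$-algorithm applied to the subsequence of $(f_i-i)$ in the corresponding residue class under the order~\eqref{Eord}. Once this is secured, the remainder of the argument is a direct translation of the standard theory.
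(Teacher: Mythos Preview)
Your proposal is correct and follows essentially the same approach as the paper. The paper's proof consists solely of the sentence ``This is implied by the results of~\cite{Jo1} and~\cite[Exercise~4~on~page~65]{Knu}'', and your plan unpacks exactly these two citations: Joseph's theorem for $1)\Leftrightarrow 2)$ via the identification of the integral Weyl subgroup with the residue-class splitting, and Knuth's characterisation of insertion-tableau equality by elementary Knuth moves for $2)\Leftrightarrow 3)$.
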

\begin{proof}This is implied by the results of~\cite{Jo1} and~\cite[Exercise~4~on~page~65]{Knu}.\end{proof}
In what follows, it will be convenient to encode Young tableaux via sequences. %We need a comprehensible way to encode tableaux and we would like to deal with that issue as follows. 
\begin{nota}To a Young tableau $T$ with $n$ boxes filled by elements of $a+\mathbb Z$ we attach the sequence ${\rm seq}(T)\in\mathbb F^n$ which consists of the rows of $T$ ordered in the inverse lexicographical order (shorter rows come first; among rows of equal length, rows with smaller first element come first).\end{nota}
It is straightforward to check that $$RS({\rm seq}(T))=T.$$
This implies that $T$ can be encoded by ${\rm seq}(T)$. If $T_1,..., T_s$ is a sequence of tableaux we set ${\rm seq}(T_1,..., T_s)$ to be the concatenation of the sequences ${\rm seq}(T_1),...,{\rm seq}(T_s)$.
\begin{Ex}If $T=
\begin{array}{ccc}
\hline\multicolumn{1}{|c|}{4+a}&\multicolumn{1}{|c|}{2+a}&\multicolumn{1}{|c|}{1+a}\\
\hline \multicolumn{1}{|c|}{4+a}&\multicolumn{1}{|c|}{1+a}&\\
\hhline{--~}\multicolumn{1}{|c|}{4+a}&\multicolumn{1}{|c|}{1+a}&\\
\hhline{--~}\multicolumn{1}{|c|}{3+a}&&\\
\hhline{-~~}
\end{array}$ then ${\rm seq}(T)=(3+a, 4+a, 1+a, 4+a, 1+a, 4+a, 2+a, 1+a)$. If
$$T_1=\begin{array}{cc}
\hline\multicolumn{1}{|c|}{7+a}&\multicolumn{1}{|c|}{-4+a}\\
\hline \multicolumn{1}{|c|}{-8+a}&\\
\hhline{-~}
\end{array},\hspace{10pt}
T_2=\begin{array}{cc}
\hline\multicolumn{1}{|c|}{-4+b}&\multicolumn{1}{|c|}{-6+b}\\
\hline \multicolumn{1}{|c|}{-5+b}&\\
\hhline{-~}
\end{array}
$$ with $a-b\notin\mathbb Z$ then ${\rm seq}(T_1, T_2)=(-8+a, 7+a, -4+a, -5+b, -4+b, -6+b).$\end{Ex}
\subsection{Coherent local systems and their annihilators}\label{SScls}

We now recall some results on c.l.s.. 
The defininition of c.l.s. is given in Section~\ref{Sback}. 
In the current section we write $Q=\{Q_n\}$ for a c.l.s., where $Q_n=\{[L_n^\alpha]\}$ for some simple finite-dimensional modules $L_n^\alpha$. 
Since each $L_n^\alpha$ is determined by its dominant $\frak b_n$-highest weight $\lambda_n^\alpha$, we can write $\{\lambda_n^\alpha\}$ instead. 
It is convenient to think of the highest weights $\lambda_n^\alpha$ as functions $f^\alpha\in\mathbb Z_{\ge0}^n\subset\mathbb F^n$ with the normalizing conditions
$$f^\alpha(1)\ge f^\alpha(2)\ge...\ge f^\alpha(n)=0$$
or, equivalently, as partitions with at most $n-1$ parts.  
In this notation, $Q_n=\{f^\alpha\}$. 

The {\it annihilator} $I(Q)$ of a c.l.s. $Q$ is the ideal $\cup_n(\cap_\alpha{\rm Ann}_{{\rm U(\frak{sl}(n))}}L_n^\alpha)\subset{\rm U}(\frak{sl}(\infty))$.

Define functions $f_{k, n}\in\mathbb Z_{\ge0}^n$ by setting
$$f_{k, n}(i):=\begin{cases}1&{\rm~if~}i\le k\\0&{\rm otherwise}\end{cases}.$$

The set of c.l.s. is partially ordered and forms a lattice:

$$Q\subset Q'=\{Q_n\subset Q_n'\},\hspace{10pt}Q\cap Q'=\{Q_n\cap Q'_n\}.$$
In addition, Zhilinskii defines the following Cartan product on c.l.s.:
$$(Q'Q'')_n:=\{f\in\mathbb Z^{n}\mid f=f'+f''{\rm~for~some~}f'\in (Q')_n, f''\in(Q'')_n\},$$ 
see~\cite[Subsection~2.1]{Zh1}. 
A main result of Zhilinskii is that any irreducible c.l.s. is a Cartan product of basic c.l.s.. 
The latter are denoted by $\mathcal L_i, \mathcal R_i, \mathcal L_i^\infty, \mathcal R_i^\infty, \mathcal E, \mathcal E^\infty$, and are defined as follows:
\begin{center}$\mathcal E^\infty$ is the c.l.s. consisting of all integral dominant weights on all levels,\end{center}
$$(\mathcal L_i)_n:=\{f_{k, n}\}_{0\le k\le i},\hspace{10pt}(\mathcal L_i^{\infty})_n:=\{f\in(\mathcal E^\infty)_n\mid f(k)=0{\rm~for~}k> i\},$$
$$(\mathcal R_i)_n:=\{f_{k, n}\}_{n-i\le k\le n},\hspace{10pt}(\mathcal R _i^{\infty})_n:=\{f\in(\mathcal E^\infty)_n\mid f(k)=f(n-i){\rm~for~}k\le n-i\},$$
$$\mathcal E_n:=\{f_{k, n}\}_{0\le k< n}.$$
We can now state 
\begin{proposition} [{Unique factorization property~\cite[Theorem~2.3.1]{Zh1}}] Any proper irreducible c.l.s., i.e., any irreducible c.l.s. non equal $\mathcal E^\infty$, can be expressed uniquely as a Cartan product in the following form:
\begin{equation}{\rm cls}(r', r'', g, {X}, {Y}):=(\mathcal L_{r'}^\infty\mathcal
L_{r'+1}^{l_1-l_2}\mathcal L_{r'+2}^{l_2-l_3}...\mathcal
L_{r'+s}^{l_s-0})~~\mathcal E^g~~(\mathcal R_{r''}^\infty\mathcal
R_{r''+1}^{r_1-r_2}\mathcal R_{r''+2}^{r_2-r_3}...\mathcal R_{r''+t}^{r_t-0})\label{Eclsm}\end{equation}
where $r', r'', g$ are nonnegrative integers, and $${X}=(l_1\ge...\ge l_s>0),\hspace{10pt}{Y}=(r_1\ge...\ge r_t>0)$$ are Young diagrams.
Here, for $r'=0$, $\mathcal L_{r'}^\infty$ is assumed to be the  c.l.s. $\mathcal T$ consisting of the one-dimensional $\frak{sl}(n)$-module at all levels, and similarly $\mathcal R_{r''}^\infty$ is assumed to equal $\mathcal T$ for $r''=0$.
\end{proposition}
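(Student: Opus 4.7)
The plan is to prove existence and uniqueness separately, by extracting numerical invariants from an arbitrary proper irreducible c.l.s.\ $Q$ and using them to reconstruct the factorization.

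First I would check some preliminaries: each of the basic c.l.s.\ $\mathcal L_i, \mathcal R_i, \mathcal L_i^\infty, \mathcal R_i^\infty, \mathcal E$ is itself irreducible, the Cartan product is commutative and associative on c.l.s., and the basic c.l.s.\ are pairwise ``coprime'' in the sense that no nontrivial c.l.s.\ Cartan-divides two distinct basic ones. All three points follow by direct inspection of the definitions in Subsection~\ref{SScls}, comparing the supports and shapes of weights appearing in each basic c.l.s.

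For existence I would proceed by successive extraction. Define $r' := \max\{i \geq 0 \mid Q = \mathcal L_i^\infty \cdot Q' \text{ for some c.l.s.\ } Q'\}$; finiteness of this maximum is ensured by bounding $i$ in terms of the number of leftmost entries of $f \in Q_n$ whose values grow unboundedly as $n \to \infty$. Dually define $r''$, and then let $g$ be the largest integer $k$ such that $\mathcal E^k$ Cartan-divides the quotient $Q / (\mathcal L_{r'}^\infty \mathcal R_{r''}^\infty)$. After dividing out $\mathcal L_{r'}^\infty \mathcal R_{r''}^\infty \mathcal E^g$, I would show that the remainder $Q^{\mathrm{red}}$ is a finite Cartan product of fundamental pieces $\mathcal L_j$ with $j > r'$ and $\mathcal R_j$ with $j > r''$. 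Irreducibility of $Q^{\mathrm{red}}$ then forces the sequence of exponents of $\mathcal L_{r'+1}, \mathcal L_{r'+2}, \dots$ to be nonincreasing --- otherwise $Q^{\mathrm{red}}$ would split as a proper union of two sub-c.l.s.\ --- and these exponents read off as successive differences $l_j - l_{j+1}$ of a partition $X$; the same argument produces $Y$ on the right.

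For uniqueness, I would show that each of $r', r'', g, X, Y$ is recoverable intrinsically from $Q$: $r'$ from the asymptotic count of leftmost unboundedly growing entries in weights $f \in Q_n$, $r''$ dually, $g$ from the fundamental (Grassmannian) part of the middle range, and $X, Y$ from the residual finite row lengths on either side. The pairwise coprimality of basic c.l.s.\ then forces all exponents to be the ones extracted, yielding uniqueness.

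The main obstacle is the existence step \emph{after} the three infinite-type factors are removed: one must prove that $Q^{\mathrm{red}}$ is genuinely a \emph{finite} Cartan product of fundamentals and that the exponents assemble into partitions. This requires a careful inductive argument using the compatibility of the branching rules across levels $n$ built into the definition of a c.l.s., together with the observation that any ``inversion'' in the sequence of exponents would produce a nontrivial decomposition of $Q^{\mathrm{red}}$ as a union of proper sub-c.l.s., contradicting irreducibility.
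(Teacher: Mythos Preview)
The paper does not give a proof of this proposition at all: it is stated with the attribution ``Unique factorization property~\cite[Theorem~2.3.1]{Zh1}'' and then used as a black box. In other words, the paper's ``proof'' is a citation of Zhilinskii's original result, so there is no argument in the paper to compare your proposal against.

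That said, a brief comment on your sketch. The overall strategy of extracting $r', r'', g$ as maximal exponents and then reading off $X, Y$ from the residue is the natural one, and is close in spirit to how Zhilinskii organizes the classification. But one step is not well posed as written: you speak of the ``quotient $Q / (\mathcal L_{r'}^\infty \mathcal R_{r''}^\infty)$'' and of a basic c.l.s.\ ``Cartan-dividing'' another, yet c.l.s.\ only form a commutative monoid under the Cartan product, not a group, and cancellation is not automatic. You would need to show that if $Q = \mathcal L_{r'}^\infty \cdot Q'$ then $Q'$ is uniquely determined (or at least that a canonical choice exists), which is essentially part of what the proposition asserts. Zhilinskii's argument avoids this circularity by first classifying \emph{all} c.l.s.\ (not just irreducible ones) via explicit combinatorial data extracted from the Gelfand--Tsetlin branching, and then reading off the irreducible ones and their factorization from that classification; the ``division'' step is replaced by a direct parametrization. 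Your approach could be made to work, but the cancellation/quotient issue is a genuine gap that would need to be filled before the argument goes through.
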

As we have shown in~\cite{PP1}, the annihilator $I({\rm cls}(r', r'', g, X, Y))$ depends on the following four parameters
\begin{equation}r:=r'+r'', g, X, Y,\label{Equad}\end{equation}
and all such annihilators are in a natural bijection with quadruples~(\ref{Equad}) where $r, g\in\mathbb Z_{\ge0}$, and $X, Y$ are arbitrary Young diagrams. 
We set $$I(r, g, X, Y):=I({\rm cls}(r, 0, g, X, Y)).$$
It follows from~\cite{Zh1} that $I(r, g, X, Y)$ is a primitive ideal of ${\rm U}(\frak{sl}(\infty))$. 
The main result of~\cite{PP5} claims that the ideals $I(r, g, X, Y)$ exhaust all nonzero proper primitive ideals of ${\rm U}(\frak{sl}(\infty))$. 

Next, following Zhilinskii, we attach to any basic c.l.s. $Q$ a sequence $\gamma(Q; \cdot)$ of $\frak{sl}(2n)$-modules by dispaying the respective highest weights:
$$\gamma(\mathcal L_i; n):=f_{i, 2n},~2n> i,\hspace{10pt}\gamma(\mathcal L_i^{\infty}; n):=(2i-1)f_{i, 2n},~2n>i,$$
$$\gamma(\mathcal R_i; n):=f_{2n-i, 2n},~2n>i,\hspace{10pt}\gamma(\mathcal R_i^{\infty}; n):=(2i-1)f_{2n-i, 2n},~2n>i,$$
$$\gamma(\mathcal E; n):=f_{n, 2n},~n>0.$$
Using~(\ref{Eclsm}) and the rule $\gamma(Q'Q''; n):=\gamma(Q'; n)+\gamma(Q''; n)$, we extend the definition of $\gamma(Q; n)$ to all proper irreducible c.l.s. $Q$. 

To state the next lemma we need to define a {\it precoherent local system} ({\it p.l.s.} for short) $Q$. 
That consists of nonempty sets $Q_n$ of isomorphism classes $[L_n^\alpha]$ of simple finite-dimensional $\frak{sl}(n)$-modules $L_n^\alpha$ for each $n\ge2$, such that each $\frak{sl}(n)$-module $L_n^{\alpha_0}$ branches over $\frak{sl}(n-1)$ as a sum of $\frak{sl}(n-1)$-modules among $L_{n-1}^{\alpha}$. 
For a p.l.s. we do not require that  every $L_{n-1}^\alpha$ with $[L_{n-1}^\alpha]\in Q_{n-1}$ appear in the $\frak{sl}(n-1)$-decomposition of a suitable $L_n^{\alpha_0}$ with $[L_n^{\alpha_0}]\in Q_n$. 

\begin{lemma}\label{Lqch}Let $Q'=\{Q'_n\}$ be a p.l.s. and let $Q$ be an irreducible c.l.s. such that $\gamma(Q; n)\in (Q')_{2n}$ for $n\gg0$. Then $Q\subset Q'$.\end{lemma}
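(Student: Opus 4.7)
The plan is to prove $Q_m\subset Q'_m$ for every $m\ge 2$ by exhibiting, for each $\mu\in Q_m$, a Gelfand--Tsetlin chain
\[
\gamma(Q;n)=\nu_{2n},\ \nu_{2n-1},\ \ldots,\ \nu_m=\mu,
\]
where $\nu_{k-1}$ appears in the $\frak{sl}(k-1)$-branching of $\nu_k$ for each $k$. Iterating the defining property of a p.l.s.\ for $Q'$, starting from $\gamma(Q;n)\in Q'_{2n}$, would then force $\nu_{2n-1}\in Q'_{2n-1},\ldots,\mu\in Q'_m$, which is the desired conclusion.

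My first move would be to invoke the unique factorization \eqref{Eclsm} of $Q$ as a Cartan product of basic c.l.s., together with the additivity rule $\gamma(Q'Q'';n)=\gamma(Q';n)+\gamma(Q'';n)$. The interlacing inequalities of Gelfand--Tsetlin are themselves additive: if $\mu'_j\ge \nu'_j\ge \mu'_{j+1}$ and $\mu''_j\ge \nu''_j\ge \mu''_{j+1}$, then $(\mu'+\mu'')_j\ge (\nu'+\nu'')_j\ge (\mu'+\mu'')_{j+1}$. Hence branching chains constructed for each basic-c.l.s.\ factor of $Q$ can be summed componentwise to produce a branching chain for $\gamma(Q;n)$, reducing the problem to the case where $Q$ is itself one of the basic c.l.s.\ $\mathcal L_i,\mathcal R_i,\mathcal L_i^\infty,\mathcal R_i^\infty,\mathcal E$.

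For each of these five basic types I would verify the branching claim directly from the formula for $\gamma(\tilde Q;n)$. For $\mathcal L_i$, $\mathcal R_i$, and $\mathcal E$ a single-column interlacing computation suffices: at each step one either keeps or removes the bottom one, and for $n\ge m$ the set of reachable $\frak{sl}(m)$-weights exhausts $(\mathcal L_i)_m$, $(\mathcal R_i)_m$, or $(\mathcal E)_m$. The cases $\mathcal L_i^\infty$ and $\mathcal R_i^\infty$ require more care: the rectangular shape $(2i-1)f_{i,2n}$ must, when combined via the additivity step with chains coming from the accompanying $\mathcal L_{j}^{l_j-l_{j+1}}$, $\mathcal E^g$, and $\mathcal R_j^{r_j-r_{j+1}}$ factors appearing in the overall Cartan-product decomposition of $Q$, yield an admissible interlacing chain terminating at any prescribed $\mu\in \tilde Q_m$.

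The main obstacle is precisely this last verification for the infinite factors: one must produce explicit Gelfand--Tsetlin paths from $(2i-1)f_{i,2n}$, in concert with the contributions of the finite-rectangle Cartan factors, down to the weights of $(\mathcal L_i^\infty)_m$, while tracking the interlacing inequalities at every intermediate level and checking that the componentwise sums of the separate chains remain admissible. Once this combinatorial step is carried out, the p.l.s.\ closure argument sketched in the first paragraph completes the proof.
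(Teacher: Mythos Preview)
The paper does not argue this lemma at all; it simply cites Lemma~2.3.2 of Zhilinskii's preprint~\cite{Zh1}. Your outline is therefore an attempted reconstruction, and the reduction you propose---to basic c.l.s.\ via additivity of the Gelfand--Tsetlin interlacing conditions---is sound for the finite factors $\mathcal L_i$, $\mathcal R_i$, $\mathcal E$.

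The obstacle you flag for $\mathcal L_i^\infty$ and $\mathcal R_i^\infty$ is not, however, a mere technical difficulty to be worked out: under the formula as written it is a genuine obstruction. Take $Q=\mathcal L_i^\infty$ itself, which is a legitimate irreducible c.l.s.\ (namely ${\rm cls}(i,0,0,\emptyset,\emptyset)$), so there are no accompanying Cartan factors to borrow from. With $\gamma(\mathcal L_i^\infty;n)=(2i-1)f_{i,2n}$, every Gelfand--Tsetlin descendant of this weight has all parts bounded by $2i-1$, since the normalized $\frak{sl}$-branching never increases the largest part. But $(\mathcal L_i^\infty)_m$ contains partitions with arbitrarily large first part, so no branching chain from $\gamma(\mathcal L_i^\infty;n)$ can reach them. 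Concretely, $Q'_n:=\{\mu\in(\mathcal L_i^\infty)_n\mid \mu(1)\le 2i-1\}$ is a p.l.s.\ containing $\gamma(\mathcal L_i^\infty;n)$ for every $n$, yet $\mathcal L_i^\infty\not\subset Q'$. Thus the assertion you are trying to prove is false as literally stated, and your proposed argument cannot be completed in this form.

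The most plausible explanation is a misprint in the definition of $\gamma$: replacing $(2i-1)$ by $(2n-1)$ in both $\gamma(\mathcal L_i^\infty;n)$ and $\gamma(\mathcal R_i^\infty;n)$ restores the lemma, and then your branching strategy does go through---from $(2n-1)f_{i,2n}$ one reaches, after at most $i$ branching steps, any partition with at most $i$ parts and first part $\le 2n-1$, hence for $n$ sufficiently large every fixed element of $(\mathcal L_i^\infty)_m$. You should consult Zhilinskii's original formulation to confirm the intended $\gamma$ before investing further effort in the combinatorial verification.
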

\begin{proof}The statement is implied by~\cite[Lemma~2.3.2]{Zh1}.\end{proof}
%We will consider coherent local systems of simple finite-dimensional $\frak{sl}(n)$-modules and we would like to recall a combinatorial description of these objects, see also~\cite{PP1, PP3, Zh1}. 

\subsection{Highest weight $\frak{sl}(\infty)$-modules and their annihilators}\label{SShwuan}
In what follows we identify $\frak{sl}(\infty)$ with the Lie algebra of traceless matrices $(a_{ij})_{i,j\in\Theta}$ such that each matrix has finitely many nonzero entries. 
We fix the splitting Cartan subalgebra $\frak h$ of diagonal matrices (a detailed discussion of Cartan subalgebras of $\frak{sl}(\infty)$ see in~\cite{DPS}).  
Any subset $S$ of $\Theta$ defines a subalgebra $\frak{sl}(S)$ spanned by
$$\{e_{ij}\}_{i\ne j\in S},\hspace{10pt}\{e_{ii}-e_{jj}\}_{i, j\in S}.$$
If $S$ is infinite then $\frak{sl}(S)\cong\frak{sl}(\infty)$, and $\frak{sl}(S)\cong\frak{sl}(|S|)$ if $S$ is finite, where $|S|$ is the cardinality of $S$.

A total order $\prec$ on $S$ defines a splitting Borel subalgebra
$$\frak b^S(\prec):={\rm span}\{e_{ii}-e_{jj}\}_{i, j\in S}+{\rm span}\{e_{ij}\}_{i\prec j\in S},$$
of $\frak{sl}(S)$, see~\cite{PP2} for more details.

A function $f: S\to\mathbb F$ defines a character $\lambda^S_f$ of $\frak b^S(\prec)$ such that 
$$\lambda^S_f(e_{ij})=0{\rm~for}~i\prec j,\hspace{10pt}\lambda^S_f(e_{ii}-e_{jj})=f(i)-f(j).$$
Let $\mathbb F^S_f$ be the respective one-dimensional $\frak b^S(\prec)$-module and let
$$M^S_\prec(f):=M^S_\prec(\lambda_f):={\rm U}(\frak{sl}(S))\otimes_{{\rm U}(\frak b^S(\prec))}\mathbb F^S_f.$$
Denote by $L^S_\prec(f):=L^S_\prec(\lambda_f)$ the unique simple quotient of $M^S_\prec(\lambda_f)=M^S_\prec(f)$. Put $$I_\prec^S(f):={\rm Ann}_{{\rm U}(\frak{sl}(S))}L^S_\prec(f).$$

If $F\subset\Theta$ is a finite subset, then $L_\prec^F(f)$ is the $\frak{sl}(F)$-module $L(f|_F)$ where the totally ordered set $(F, \prec)$ is naturally identified with $(\{1,..., n\}, <)$. 
In what follows, when given a total order $\prec$ on $\Theta$ and a function $f: \Theta\to\mathbb F$, we will use the above notations $M^\Theta_\prec(f), L^\Theta_\prec(f), I^\Theta_\prec(f)$ having in mind that $\prec$ defines an order on $S$ and $f$ defines a function on $S$ via restriction. 
Whenever $S=\Theta$ we write simply $\frak b(\prec), M_\prec(f), L_\prec(f), I_\prec(f)$. 
Note also that in Section~\ref{Sback} our notation $L_\frak b(\lambda)$ for a simple highest weight module displayed explicitly the relevant Borel subalgebra $\frak b$ and the highest weight $\lambda$, so $L_\prec(f)$ is another notation for $L_{\frak b(\prec)}(\lambda_f)$.

We will be particularly interested in several special kinds of splitting Borel subalgebras. 
\begin{definition} We say that $\frak b^S(\prec)$ is a {\it Dynkin} Borel subalgebra if $(S, \prec)$ is isomorphic as an ordered set to $(\mathbb Z_{>0}, <), (\mathbb Z_{<0}, <)$ or $(\mathbb Z, <)$. 
This is equivalent to the condition that every root of $\frak b^S(\prec)$ is a finite sum of simple roots, see~\cite{Th}.
\end{definition}
Let $\Theta_1, \Theta_2\subset\Theta$ be two subsets. 
We write $\Theta_1\prec \Theta_2$ if $s_1\prec s_2$ for any $s_1\in \Theta_1$ and $s_2\in \Theta_2$.
\begin{definition} We say that $\frak b^S(\prec)$ is an {\it ideal} Borel subalgebra if $S$ can be partitioned into subsets $$S_1\prec S_2\prec S_3$$ such that $(S_1, \prec)\cong (\mathbb Z_{> 0}, <)$ and $(S_3, \prec)\cong (\mathbb Z_{<0}, <)$.\end{definition}
\begin{definition} Let $S\subset\Theta$ be a subset. We say that $f\in\mathbb F^S$ is {\it $\prec$-locally constant on $S$} if there exists a partition $S_1\prec...\prec S_t$ of $S$ such that $f|_{S_i}$ is constant for every $S_i, 1\le i\le t$. 
We say that $f\in\mathbb F^S$ is {\it almost integral on $S$} if there exists a finite set $F \subset S$ such that $f(i)-f(j)\in\mathbb Z$ for all $i, j\in S\backslash F$.\end{definition}

\begin{theorem}[{\cite[Theorem~9]{PP2}}] The following conditions are equivalent:

1) $I_\prec(f)\ne0$,

2) $f$ is $\prec$-locally constant and almost integral on $\Theta$.\end{theorem}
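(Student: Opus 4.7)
The plan is to prove both implications by reducing to finite-dimensional subalgebras $\frak{sl}(F)$ for $F\subset\Theta$ finite, and invoking Joseph's description of primitive ideals of ${\rm U}(\frak{sl}(n))$ via the Robinson-Schensted algorithm (Theorem~\ref{Tjo}). The unifying observation is that $I_\prec(f)=\bigcup_F{\rm Ann}_{{\rm U}(\frak{sl}(F))}L_\prec^F(f|_F)$, so a nonzero $u\in{\rm U}(\frak{sl}(\infty))$ lies in $I_\prec(f)$ if and only if it lies in $I_\prec^F(f|_F)$ for every finite $F$ containing its support, where $F$ is ordered by $\prec$ and identified with $\{1,\dots,|F|\}$.

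For the implication $(2)\Rightarrow(1)$, assume $f$ is $\prec$-locally constant with partition $\Theta=\Theta_1\sqcup\cdots\sqcup\Theta_t$ (where $\Theta_1\prec\cdots\prec\Theta_t$ and $f|_{\Theta_i}=c_i$) and almost integral. I would produce a nonzero element of $I_\prec(f)$ by constructing an irreducible c.l.s. $Q$ such that $L_\prec(f)$ is annihilated by $I(Q)$. Using the basic c.l.s. from Subsection~\ref{SScls}, assign to each infinite block on the left a factor $\mathcal L_{r_i}^\infty$, to each infinite block on the right a factor $\mathcal R_{r_i}^\infty$, and handle the single middle block (if any) together with the finitely many exceptional elements permitted by almost integrality via finitely many $\mathcal L_i$, $\mathcal R_i$, and $\mathcal E$ factors. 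One then verifies on each finite level $F_n$, using Gelfand-Tsetlin patterns and formula~(\ref{Eclsm}), that the associated sequence $\gamma(Q;n)$ is dominated by the $\frak{sl}(F_n)$-weight $f|_{F_n}$ in the dominance order on partitions, so that via Lemma~\ref{Lqch} the annihilator $I(Q)$ is contained in $I_\prec(f)$.

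For the converse $(1)\Rightarrow(2)$, I argue contrapositively. Suppose $f$ fails either condition, and fix a candidate nonzero element $u\in{\rm U}(\frak{sl}(\infty))$ supported on a finite set $F_0$. The failure of $\prec$-local constancy supplies, cofinally in $F\supseteq F_0$, a triple $i\prec k\prec j$ in $F$ with $f(i)$, $f(k)$, $f(j)$ arranged so that none of the admissible interchanges of Subsubsection~\ref{SSkm} can move $k$ out of its position; the failure of almost integrality supplies infinitely many distinct integral classes. Via Theorem~\ref{Tjo}, the primitive ideal $I_\prec^F(f|_F)$ is determined by the tuple of recording tableaux produced by applying the RS-algorithm to $f|_F+\rho_F$ split by integral classes. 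Enlarging $F$ adaptively, one either forces the shape of some recording tableau to grow in a direction incompatible with the previous tableau, or creates an entirely new component-tableau. In either case, the primitive ideal $I_\prec^F(f|_F)$ strictly decreases along a cofinal chain, and one concludes that $u$ cannot belong to the intersection.

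The main obstacle is the converse direction. The hard part is the combinatorial control of how the recording tableaux evolve when $F$ is enlarged by a single element $\theta$ with prescribed $f$-value: one needs a precise insertion-level analogue of the admissible-interchange analysis of Subsubsection~\ref{SSkm}, identifying exactly when insertion of $(f(\theta),\theta)$ changes the shape rather than merely permuting entries. Once this local criterion is established, the ``generic stabilization'' conclusion that no fixed $u$ survives an infinite sequence of shape changes is routine; but setting it up requires carefully distinguishing finite blocks (whose tableau contributions saturate) from infinite blocks (which must be processed through the basic c.l.s. machinery of Subsection~\ref{SScls}) and handling the interaction between the two.
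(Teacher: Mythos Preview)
The present paper does not contain a proof of this statement: it is quoted from \cite[Theorem~9]{PP2} and used as input for the later sections. So there is nothing in this paper to compare your proposal against, and I comment only on the proposal itself.

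For $(2)\Rightarrow(1)$, your invocation of Lemma~\ref{Lqch} is backwards. That lemma yields an inclusion $Q\subset Q'$ of c.l.s., which translates into $I(Q)\supseteq I(Q')$---the opposite of the inclusion $I(Q)\subset I_\prec(f)$ you want. A second obstacle is that the c.l.s.\ machinery records only \emph{finite-dimensional} $\frak{sl}(n)$-modules, whereas for a general locally constant almost integral $f$ the restrictions $L_\prec^F(f|_F)$ are typically infinite dimensional (the block values $c_i$ need not satisfy $c_i-c_{i+1}\in\mathbb Z_{\ge0}$), so they are not elements of any $Q_n$ at all. The route actually taken, both in \cite{PP2} and in the forward inclusions of Propositions~\ref{Prsz-},~\ref{Prsz+},~\ref{Prsz} here, is to invoke \cite[Lemma~5.4]{PP2} to strip off finitely many coordinates from $f$ without changing the annihilator, reducing to a situation where the annihilator is visibly one of the known nonzero ideals $I(r,g,X,Y)$.

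For $(1)\Rightarrow(2)$, a strictly decreasing chain of ideals can have nonzero intersection, so ``the primitive ideal $I_\prec^F(f|_F)$ strictly decreases along a cofinal chain'' does not by itself expel a fixed $u$. What is needed is a quantitative statement: the associated variety of $I_\prec^F(f|_F)$ (equivalently, via Joseph, the \emph{shape} of the RS output) must grow without bound as $F$ exhausts $\Theta$, so that $I_\prec^F(f|_F)\cap{\rm U}(\frak{sl}(F_0))$ eventually vanishes for any fixed $F_0$. Your insertion-level analysis gestures at this but does not deliver it; you must show that failure of local constancy (respectively, of almost integrality) produces, inside arbitrarily large $F$, subsequences whose RS shape has arbitrarily many rows (respectively, arbitrarily many independent integral components), not merely that the shape changes from one step to the next.
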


The next proposition relates the computation of the annihilators of simple highest weight $\frak{sl}(\infty)$-modules to the computation of the annihilators of simple highest weight $\frak{sl}(n)$-modules for finite~$n$. 

\begin{proposition}[{\cite[Lemma~5.7]{PP2}}]\label{Pfinf} Let $F_1\subset F_2\subset...\subset F_n\subset...$ be an infinite sequence of finite subsets of $\Theta$, and let $S:=\cup_i F_i$. Then
$$I^S_\prec(f)=\cup_n(\cap_{i\ge n}I^{F_i}_\prec(f)).$$\end{proposition}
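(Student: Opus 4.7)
Plan:

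My plan is to reduce the stated identity to a clean comparison of maximal proper submodules of Verma modules and then exploit the standard fact that, for any cyclic highest weight module with one-dimensional highest weight space, the maximal proper submodule equals
$$\{\, m : v \notin U(\frak g)\, m \,\},$$
where $v$ is the generating highest weight vector. By PBW this applies to $M^S_\prec(f)$ and to each $M^{F_i}_\prec(f)$, whose respective $\lambda^S_f$- and $\lambda^{F_i}_f$-weight spaces are one-dimensional.

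For the reduction, note that $U(\frak{sl}(S)) = \bigcup_n U(\frak{sl}(F_n))$ and that $\bigcap_{i \ge n} I^{F_i}_\prec(f) \subset U(\frak{sl}(F_n))$ automatically, so the identity is equivalent to showing, for every $n$ and every $x \in U(\frak{sl}(F_n))$,
$$x \in I^S_\prec(f) \iff x \in I^{F_i}_\prec(f) \text{ for all } i \ge n.$$
The universal property of Verma modules and PBW freeness of $U(\frak n^-_S)$ over $U(\frak n^-_{F_i})$ yield a canonical $\frak{sl}(F_i)$-embedding $M^{F_i}_\prec(f) \hookrightarrow M^S_\prec(f)$ sending $v_{F_i}$ to the highest weight vector $v_S$ of $M^S_\prec(f)$. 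Writing $N^{F_i}$ and $N^S$ for the maximal proper submodules, both may be viewed as subspaces of $M^S_\prec(f)$.

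The central step will be the following \emph{Key Claim}: for every $k$ and every $m \in M^{F_k}_\prec(f)$ (viewed inside $M^S_\prec(f)$),
$$m \in \bigcap_{i \ge k} N^{F_i} \iff m \in N^S.$$
Indeed, by the characterization recalled above, $m \in N^{F_i}$ is equivalent to $v_S \notin U(\frak{sl}(F_i))\, m$ inside $M^S_\prec(f)$. Since the sets $U(\frak{sl}(F_i))\, m$ form an increasing chain in $i$ with union $U(\frak{sl}(S))\, m$, intersecting the condition over $i \ge k$ converts it into $v_S \notin U(\frak{sl}(S))\, m$, which is precisely $m \in N^S$.

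Both implications then follow formally. If $x L^S_\prec(f) = 0$, then for any $u \in U(\frak{sl}(F_i))$ the element $xu v_S \in M^S_\prec(f)$ lies in $N^S$; by the Key Claim, $xu v_S \in N^{F_i}$, so $x M^{F_i}_\prec(f) \subset N^{F_i}$ and $x \in I^{F_i}_\prec(f)$. Conversely, if $x \in I^{F_i}_\prec(f)$ for all $i \ge n$, then given any $u \in U(\frak{sl}(F_j))$ and $k = \max(n, j)$, the vector $xu v_S$ lies in $N^{F_i}$ for every $i \ge k$; by the Key Claim it lies in $N^S$, so $xu v_S = 0$ in $L^S_\prec(f)$. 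Since every vector in $L^S_\prec(f)$ has the form $u v_S$, this yields $x \in I^S_\prec(f)$. The only genuinely subtle point is arriving at the correct formulation of the Key Claim; once it is in hand, the rest is routine.
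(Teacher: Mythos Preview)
Your proof is correct. The paper itself does not prove this proposition but simply cites it as \cite[Lemma~5.7]{PP2}, so there is no in-paper argument to compare against. Your approach---embedding each $M^{F_i}_\prec(f)$ into $M^S_\prec(f)$ via $v_{F_i}\mapsto v_S$, invoking the characterization $N=\{m:v\notin U(\frak g)\,m\}$ of the maximal proper submodule of a cyclic highest weight module with one-dimensional top weight space, and then using $U(\frak{sl}(S))\,m=\bigcup_{i\ge k}U(\frak{sl}(F_i))\,m$ to establish the Key Claim---is a clean, self-contained argument. The PBW freeness of $U(\frak n^-_S)$ over $U(\frak n^-_{F_i})$ indeed guarantees that the embeddings are injective and that the $\lambda^{F_i}_f$-weight space of $M^{F_i}_\prec(f)$ is one-dimensional, so the characterization of $N^{F_i}$ applies. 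Both implications at the end follow exactly as you describe.
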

\begin{corollary} Let $F_1\subset F_2\subset...\subset F_n\subset...$ be an infinite sequence of finite sets such that $S=\cup_i F_i$. Let $f'\in\mathbb F^S$ be a function such that one of the following holds:

--- $I^{F_i}_{\prec}(f)=I^{F_i}_{\prec}(f')$ for all $i\in\mathbb Z_{>0}$,

--- $I^{F_i}_{\prec}(f)=I^{F_i}_{\prec}(f')$ for all but finitely many $i\in\mathbb Z_{>0}$.\\
Then $I^S_\prec(f)=I^S_\prec(f')$.
\end{corollary}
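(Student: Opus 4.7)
The plan is to deduce the corollary as an almost immediate consequence of Proposition~\ref{Pfinf}. Note first that the two conditions in the statement are really one: the first (``for all $i$'') is a special case of the second (``for all but finitely many $i$''), so it suffices to treat the latter.

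Fix an integer $N$ such that $I^{F_i}_\prec(f) = I^{F_i}_\prec(f')$ for every $i \ge N$. For any $n \ge N$ we then have the equality of intersections
$$\bigcap_{i\ge n} I^{F_i}_\prec(f) \;=\; \bigcap_{i\ge n} I^{F_i}_\prec(f'),$$
since the two families of ideals being intersected are literally the same. Taking the union over all $n \ge N$ yields
$$\bigcup_{n\ge N}\bigcap_{i\ge n} I^{F_i}_\prec(f) \;=\; \bigcup_{n\ge N}\bigcap_{i\ge n} I^{F_i}_\prec(f').$$

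The final step is to observe that the union over $n \ge N$ agrees with the union over all $n \ge 1$ appearing in Proposition~\ref{Pfinf}: indeed, enlarging $n$ only shrinks the intersection $\cap_{i\ge n} I^{F_i}_\prec(f)$ in the ordering by inclusion of ideals? No, the opposite: shrinking the index range enlarges the intersection. So $\bigcap_{i\ge n} I^{F_i}_\prec(f) \subseteq \bigcap_{i\ge n+1} I^{F_i}_\prec(f)$, and therefore the union over $n\ge 1$ equals the union over $n\ge N$ (any ``early'' term is contained in the $n=N$ term). Applying Proposition~\ref{Pfinf} to each side then gives $I^S_\prec(f) = I^S_\prec(f')$.

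There is no real obstacle here; the only point requiring a moment's care is the cofinality argument that lets us replace $\bigcup_{n\ge 1}$ by $\bigcup_{n\ge N}$ in the expression furnished by Proposition~\ref{Pfinf}. Once that is noted, the proof is a one-line identification of unions of intersections.
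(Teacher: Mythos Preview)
Your argument is correct and is exactly the intended one: the paper states this corollary without proof because it follows immediately from Proposition~\ref{Pfinf}, and you have simply spelled out that immediate deduction (including the cofinality observation that the union over $n\ge 1$ coincides with the union over $n\ge N$ since the intersections $\cap_{i\ge n}I^{F_i}_\prec(f)$ are increasing in $n$). The only cosmetic issue is the mid-sentence self-correction (``No, the opposite''), which you should clean up in a final version.
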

\begin{corollary}\label{Cknu63} Let $F_1\subset F_2\subset...\subset F_n\subset...$ be an infinite sequence of finite sets such that $S=\cup_i F_i$. Let $f'\in\mathbb F^S$ be a function such that one of the following holds:

--- $f|_{F_i}$ and $f'|_{F_i}$ are connected by a series of shifted admissible interchanges for all $i\in\mathbb Z_{>0}$,

--- $f|_{F_i}$ and $f'|_{F_i}$ are connected by a series of shifted admissible interchanges for all but finitely many $i\in\mathbb Z_{>0}$.
\\
Then $I^S_\prec(f)=I^S_\prec(f')$.
\end{corollary}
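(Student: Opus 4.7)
The proof is a straightforward chaining of two results already in place: Theorem~\ref{Tjo} converts the admissible-interchange hypothesis into an equality of annihilators at each finite level, and the preceding corollary propagates that equality to the infinite level.

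In detail, fix an index $i$ for which $f|_{F_i}$ and $f'|_{F_i}$ are connected by a series of shifted admissible interchanges. Identifying $(F_i,\prec)$ with $(\{1,\ldots,|F_i|\},<)$ as in Section~\ref{SShwuan}, the two restrictions become finite sequences in $\mathbb F^{|F_i|}$, and the hypothesis is precisely condition 3) of Theorem~\ref{Tjo} with the global shift $k=0$. The equivalence 3)$\Leftrightarrow$1) of that theorem then gives $I(f|_{F_i})=I(f'|_{F_i})$, which in the notation of Section~\ref{SShwuan} is $I^{F_i}_\prec(f)=I^{F_i}_\prec(f')$. Under the first alternative of the hypothesis this equality holds for every $i\in\mathbb Z_{>0}$; under the second it holds for all but finitely many $i$. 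In either case the preceding corollary applies verbatim and yields $I^S_\prec(f)=I^S_\prec(f')$.

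I do not expect any real obstacle, since both ingredients are already available. The only small point to double-check is the alignment of conventions: a \emph{shifted admissible interchange} as defined in Section~\ref{SSkm} is exactly the $k=0$ instance of the relation appearing in condition 3) of Theorem~\ref{Tjo}, so no nontrivial global constant needs to be tracked when translating the hypothesis from admissible interchanges into the language of ideals.
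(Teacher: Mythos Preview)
Your argument is correct and is exactly the route the paper intends: Corollary~\ref{Cknu63} is stated without proof precisely because it follows by combining Theorem~\ref{Tjo} (the equivalence 1)$\Leftrightarrow$3), with $k=0$) and the immediately preceding corollary. Your check that the shifted admissible interchange hypothesis is the $k=0$ instance of condition~3) in Theorem~\ref{Tjo} is the only point requiring any care, and you handle it correctly.
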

\begin{corollary}\label{Crper} Let $\Theta_1\sqcup \Theta_2\sqcup...\sqcup \Theta_t$ be a partition of $\Theta$ and let $f\in\mathbb F^\Theta$ be a function such that $f|_{\Theta_i}$ is constant. Assume that $\prec_1, \prec_2$ are total orders on $\Theta$ such that
\begin{equation}\Theta_1\prec_1 \Theta_2\prec_1...\prec_1 \Theta_t{\rm~and~}\Theta_1\prec_2 \Theta_2\prec_2...\prec_2 \Theta_t.\label{Esucc}\end{equation}
Then $I_{\prec_1}(f)=I_{\prec_2}(f)$.\end{corollary}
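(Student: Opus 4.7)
My plan is to reduce to a finite-rank statement via Proposition~\ref{Pfinf} and then to realize the change of order between $\prec_1$ and $\prec_2$ as an inner automorphism of $\mathfrak{sl}(F)$ coming from a block-preserving permutation of $F$.

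Choose an exhaustion $F_1\subset F_2\subset\ldots$ of $\Theta$ by finite subsets. Applying Proposition~\ref{Pfinf} to $\prec_1$ and $\prec_2$ separately, it suffices to prove $I^{F}_{\prec_1}(f)=I^{F}_{\prec_2}(f)$ for each finite $F\subset\Theta$. Fix such an $F$ and enumerate its elements as $x_1\prec_1\ldots\prec_1 x_n$ and $y_1\prec_2\ldots\prec_2 y_n$. The hypothesis~(\ref{Esucc}) forces the elements of each block $F\cap\Theta_s$ to occupy the same range of positions in both enumerations; hence the permutation $\tau$ of $F$ defined by $\tau(x_i)=y_i$ maps every $F\cap\Theta_s$ to itself. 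Since $f$ is constant on each $\Theta_s$, we conclude $f\circ\tau=f$.

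Let $P_\tau\in GL(F)$ be the corresponding permutation matrix, and choose $\mu\in\mathbb{F}$ with $\mu^{|F|}={\rm sgn}(\tau)$ (which exists because $\mathbb{F}$ is algebraically closed of characteristic zero). Then $\mu P_\tau\in SL(F)$, and conjugation by $\mu P_\tau$ is an inner automorphism $\Phi$ of $\mathfrak{sl}(F)$ that acts by $\Phi(e_{ab})=e_{\tau(a),\tau(b)}$. Consequently $\Phi$ sends $\mathfrak{b}^F(\prec_1)$ to $\mathfrak{b}^F(\prec_2)$, and the identity $f\circ\tau=f$ shows that it fixes the character $\lambda^F_f$ of the Cartan $\mathfrak{h}^F$. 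Twisting by $\Phi^{-1}$ therefore converts $L^F_{\prec_1}(f)$ into a simple $\mathfrak{b}^F(\prec_2)$-highest weight module with highest weight $\lambda^F_f$---that is, into $L^F_{\prec_2}(f)$---while preserving its isomorphism class because $\Phi$ is inner. Hence $L^F_{\prec_1}(f)\cong L^F_{\prec_2}(f)$ as $\mathfrak{sl}(F)$-modules and their annihilators coincide.

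The only subtle point is the innerness of $\Phi$, which is precisely what the rescaling by $\mu$ secures; the rest is routine finite-dimensional bookkeeping, combined with the limit procedure of Proposition~\ref{Pfinf}.
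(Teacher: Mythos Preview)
Your reduction to finite $F$ via Proposition~\ref{Pfinf} is correct, and your identification of the block-preserving permutation $\tau$ is exactly right. The gap is in the clause ``while preserving its isomorphism class because $\Phi$ is inner.'' Twisting a $\frak g$-module by an inner automorphism does \emph{not}, in general, preserve its isomorphism class: already for $\frak{sl}(2)$, twisting the Verma module $M(\lambda)$ by ${\rm Ad}(s)$ with $s$ a representative of the nontrivial Weyl element (an inner automorphism) turns it into a lowest-weight module with weights bounded below, not isomorphic to $M(\lambda)$ unless $\lambda\in\mathbb Z_{\ge 0}$. The underlying issue is that $g\in SL(F)$ need not act on a non-integrable module, so no intertwiner is available.

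The fix is painless. You do not need $L^F_{\prec_1}(f)\cong L^F_{\prec_2}(f)$; you only need equality of annihilators. Your twist computation already gives $\Phi(I^F_{\prec_1}(f))=I^F_{\prec_2}(f)$. Now observe that any two-sided ideal $I\subset {\rm U}(\frak{sl}(F))$ is ${\rm ad}$-stable, hence (by local finiteness of the adjoint action) stable under the connected group $SL(F)$; thus $\Phi(I)=I$ for every inner $\Phi$, and so $I^F_{\prec_1}(f)=I^F_{\prec_2}(f)$. Alternatively --- and this is closer to what the paper has in mind by recording the statement as a bare corollary --- you can bypass the automorphism entirely: since $f$ is constant on each block, $\lambda^F_f$ extends to a character of the parabolic $\frak p\supset\frak b^F(\prec_1),\frak b^F(\prec_2)$ determined by the block decomposition, and both $L^F_{\prec_j}(f)$ are the unique simple quotient of the same generalized Verma module ${\rm U}(\frak{sl}(F))\otimes_{{\rm U}(\frak p)}\mathbb F_{\lambda^F_f}$. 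So the modules really are isomorphic, but for this structural reason rather than mere innerness of $\Phi$.
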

\begin{definition} Let $\prec_1,\prec_2$ be total orders on $\Theta$, and $f\in\mathbb F^\Theta$ be a function. 
We say that $\prec_1$ and $\prec_2$ are {\it $f$-equivalent} if $f$ is locally constant with respect to a partition $\Theta_1\sqcup ...\sqcup \Theta_t$ of $\Theta$ and this partition satisfies~(\ref{Esucc}).\end{definition}
Corollary~\ref{Crper} claims that $I_{\prec_1}(f)=I_{\prec_2}(f)$ for $f$-equivalent total orders $\prec_1, \prec_2$. 
%Given a locally constant and almost integral function $f\in\mathbb F^\Theta$, we will now introduce an $f$-equivalent total order $\succ_f$ on $\Theta$ defined by $f$. 
\begin{definition} Let $f\in\mathbb F^{\Theta}$ be a function, almost integral and locally constant, and let $\Theta_1\prec...\prec \Theta_t$ be some partition of $\Theta$. 
We say that this partition is {\it $f$-preferred} if 

\begin{center}$(\Theta_1, \prec)\cong(\mathbb Z_{>0}, <)$, $(\Theta_t, \prec)\cong(\mathbb Z_{<0}, <)$, $(\Theta_i, \prec)\cong(\mathbb Z, <)$ for $1<i<t$,\end{center}
and for all $i$ there exist $s_i^-\in \Theta_i$ and $s_{i+1}^+\in \Theta_{i+1}$ such that $f(s)=f(s')$ for all $s\in \Theta_i, s_i^-\prec s,$ and $s'\in \Theta_{i+1}, s'\prec s_{i+1}^+$. 
We say that a total order $\prec_f$ is {\it $f$-preferred} if there exists a partition $\Theta_1\prec_f...\prec_f \Theta_t$ which is $f$-preferred with respect to $\prec_f$.\end{definition}
%\begin{remark}\label{Rpref} 
Let $f\in\mathbb F^{\Theta}$ be an almost integral and locally finite function with respect to a partition $\Theta_1\prec...\prec \Theta_t$ of $\Theta$. 
It is easy to construct an $f$-preferred order $\prec_f$ on $\Theta$ such that $\prec_f$ is $f$-equivalent to $\prec$. 
Indeed, let $i_1, i_2,..., i_q$ be the set of indices such that $\Theta_{i_1},..., \Theta_{i_q}$ are infinite. 
We split each ordered set $\Theta_{i_k}$ into two infinite sets $\Theta_{i_k}^l, \Theta_{i_k}^r$ so that
$$\Theta_{i_1}^l\prec \Theta_{i_1}^r\prec...\prec \Theta_{i_{q}}^l\prec \Theta_{i_{q}}^r.$$
As a result, $\Theta$ equals the disjoint union
\begin{equation}(\Theta_1\sqcup \Theta_2\sqcup...\sqcup \Theta_{i_1}^l)\sqcup(\Theta_{i_1}^r\sqcup \Theta_{i_1+1}\sqcup...\sqcup \Theta_{i_2}^l)\sqcup...\sqcup (\Theta_{i_q}^r\sqcup \Theta_{i_q+1}\sqcup...\sqcup \Theta_t),\label{Eorddec}\end{equation}
and we have
$$\Theta_1\prec \Theta_2\prec...\prec \Theta_{i_1}^l\prec \Theta_{i_1}^r\prec \Theta_{i_1+1}\prec... \prec \Theta_{i_2}^l \prec...\prec \Theta_{i_q}^r\prec \Theta_{i_q+1}\prec...\prec \Theta_t.$$
The desired order $\prec_f$ will be $f$-preferred with respect to the decomposition~(\ref{Eorddec}).
To inroduce $\prec_F$, we start with $(\Theta_1\sqcup \Theta_2\sqcup...\sqcup \Theta_{i_1}^l)$ and replace the given order $\prec$ by an order $\prec_f$ isomorphic to $(\mathbb Z_{>0}, >)$ such that $\Theta_1\prec_f \Theta_2\prec_f...\prec_f \Theta_{i_1}^l$. 
Next, for $(\Theta_{i_1}^r\sqcup \Theta_2\sqcup...\sqcup \Theta_{i_1}^l)$ we replace the given order $\prec$ by an order $\prec_f$ isomorphic to $(\mathbb Z, >)$ such that $\Theta_{i_1}^r\prec_f \Theta_{i_1+1}\prec_f...\prec_f \Theta_{i_2}^l$. 
We repeat this last step $q-2$ times. 
Finally, at the right end $(\Theta_{i_q}^r\sqcup \Theta_2\sqcup...\sqcup \Theta_{i_t})$ we replace the order $\prec$ by an order $\prec_f$ isomorphic to $(\mathbb Z_{<0}, <)$ such that $\Theta_{i_q}^r\prec_f \Theta_{i_q+1}\prec_f...\prec_f \Theta_{i_t}$. 
The so obtained order $\prec_f$ is $f$-preferred and is $f$-equivalent to the original order. 
Therefore, Corollary~\ref{Crper} implies $$I_{\prec}(f)=I_{\prec_f}(f).$$ 
%We would use this remark to compute $I^\Theta_\prec(f)$ in Section~\ref{Smain}.
%\end{remark}
\section{Robinson-Schensted algorithm at infinity}

In what follows we extend the RS-algorithm to stably decreasing infinite sequences. 
Overall, the procedure is very similar to (and is based on) the one given in Subsection~\ref{SSrskf}. 
We consider functions $f\in\mathbb F^{\mathbb Z_{>0}}, \mathbb F^{\mathbb Z_{<0}}, \mathbb F^{\mathbb Z}$ and identify them with the respective sequences  
$$f(1), f(2),...,$$
$$..., f(-2), f(-1)$$
$$ ..., f(-1), f(0), f(1),...$$
Admissible interchanges for functions $f\in \mathbb F^{\mathbb Z_{>0}}, \mathbb F^{\mathbb Z_{<0}}, \mathbb F^{\mathbb Z}$ are defined as in Subsection~\ref{SSkm}.
\begin{definition}We say that $f$ is {\it stably decreasing} if $f(i)>_\mathbb Zf(i+1)$ for $|i|\gg0$.\end{definition}
The formula $\rho(i)=-i$ defines three different functions $\rho_{\mathbb Z_{<0}}\in\mathbb F^{\mathbb Z_{<0}}, \rho_{\mathbb Z}\in\mathbb F^{\mathbb Z}, \rho_{\mathbb Z_{>0}}\in\mathbb F^{\mathbb Z_{>0}}.$ 

The following ``insertion operation'' inserts a given $f_1\in\mathbb F^s$ into positions $i_1<n_2<...<i_r$ of $f_2$:

$${\rm ins}(i_1,..., i_r; f_1, f_2)(i):=
\begin{cases}f_2(i)&{\rm~if~}i< i_1,
\\f_1(t)&{\rm~if~}i=i_t, 1\le t\le r,
\\f_2(i-t)&{\rm~if~}i_t<i<i_{t+1}, 1\le t\le r-1
\\f_2(i-s)&{\rm~if~}i>i_r\end{cases},\hspace{10pt}{\rm~for~}f\in\mathbb F^{\mathbb Z},\mbox{~or~}f\in\mathbb F^{\mathbb Z_{>0}}\mbox{~and~}i_1\ge0,$$
$${\rm ins}(i_1,..., i_r; f_1, f_2)(i):=
\begin{cases} f_2(i+s)&{\rm~if~}i<i_1
\\f_1(t)&{\rm~if~}i=i_t, 1\le t\le r,
\\f_2(i+s-t)&{\rm~if~}i_t<i<i_{t+1}, 1\le t\le r-1
\\f_2(i)&{\rm~if~}i_r<i\end{cases},\hspace{10pt}{\rm~for~}f\in\mathbb F^{\mathbb Z_{<0}}, i_1\le0.$$
\begin{remark} Since the (shifted) admissible interchanges  and the respective equivalence classes are defined for all sequences regardless of any stabilization conditions, it could be an interesting combinatorial problem to study the corresponding equivalence classes. \end{remark}

\subsection{{\bf Left-infinite case ($\mathbb Z_{<0}$)}} Consider a stably decreasing sequence $f\in\mathbb F^{\mathbb Z_{<0}}$.  
We now explain how to apply the infinite RS-algorithm to $f$. 
What we do is simply apply the RS-algorithm consecutively to the finite tails $f(-n),..., f(0)$ of $f$.  
Then, for $n\ll0$, the RS-algorithm will keep modifying only one of the tableaux in the outputs of previous steps. 
This follows from the fact for $n\ll0$ the numbers $f(n)$ are in same integrality class.

Next, note that, since $f$ is stably decreasing, this modification will amount to adding the box $\begin{tabular}{|c|}\hline $f(-n-1)$\\\hline\end{tabular}$ to the left-hand side of the first row. 
In this way, the output $RS(f)$ of our algorithm consists of several (possibly none) finite Young tableaux and one tableau whose first row is infinite and all other rows are finite. 
Denote the infinite tableau by $T_1$ and the other tableaux by $T_2,..., T_s$. Denote the first row of $T_1$ by ${\rm\overline{seq}}(f)$, $T_1$ without the first row by $T_1'$. Set ${\rm \underline{seq}}(f):={\rm seq}(T_1', T_2, T_3,..., T_s)$. 
Then it is straightforward to check that\begin{equation}RS(f)=RS({\rm ins}(i_1,..., i_r; {\rm \underline{seq}}(f), {\rm \overline{seq}}(f)),\label{Erslc}\end{equation}
where $r$ is the number of elements in ${\rm \underline{seq}}(f)$ and $i_1, i_2, ..., i_r,$ are integers such that
\begin{equation}i_{k+1}>i_k+1, {\rm \overline{seq}}(f)_{i_r}>_{\mathbb Z}{\rm \underline{seq}}(f)_{k}{\rm~or~}{\rm \overline{seq}}(f)_{i_r}-{\rm \underline{seq}}(f)_{k}\notin\mathbb Z{\rm~for~all~}k\le r\label{Ccondi}\end{equation}
(the condition on $i_r$ is satisfied for $i_r\ll0$). 
The equality~(\ref{Erslc}) plays an important role in our main result below. 
\subsection{{\bf Right-infinite case ($\mathbb Z_{>0}=-\mathbb Z_{<0}$)}} Let $f\in\mathbb F^{\mathbb Z_{>0}}$ be a stably decreasing function. It is clear that the sequence
$$f^*:=(..., -f(3), -f(2), -f(1))$$
is an element of $\mathbb F^{\mathbb Z_{<0}}$ and is stably decreasing. If $g\in\mathbb F^{\mathbb Z_{<0}}$, we set $g^*$ to be the sequence
$$-g(0), -g(-1), -g(-2),...$$
Then $(f^*)^*=f$ for $f\in\mathbb F^{\mathbb Z_{>0}}$ or $\mathbb F^{\mathbb Z_{<0}}$. 

In this case, we have $$RS(f^*)=RS({\rm ins}(i_1,..., i_s; {\rm \underline{seq}}(f^*)^*,~{\rm \overline{seq}}(f^*)^*)^*),$$
where $s$ is the number of elements in ${\rm \underline{seq}}(f^*)$ and $i_1,..., i_r$ satisfy the mirror image of~(\ref{Ccondi})
$$i_{k+1}>i_k+1, {\rm \overline{seq}}(f^*)_{-i_r}>_{\mathbb Z}{\rm \underline{seq}}(f^*)_{k}{\rm~or~}{\rm \overline{seq}}(f^*)_{-i_r}-{\rm \underline{seq}}(f^*)_{k}\notin\mathbb Z{\rm~for~all~}k\le r.$$
\begin{remark} In the procedure presented in this subsection, we apply the RS-algorithm inductively starting from the ``infinite tail'' of our sequence $f$. It also makes sense to apply the RS-algorithm starting from the beginning of the sequence $f$. The result will differ by an analogue of the Schutzenberger involution, see~\cite{Knu}.\end{remark}
\subsection{{\bf Two-sided case ($\mathbb Z$)}} Consider a stably decreasing almost integral sequence $f\in\mathbb F^{\mathbb Z}$. 
We say that $f$ is {\it almost integral} if $f(n_+)-f(n_-)\in\mathbb Z$ for $n_-\ll0$ and $n_+\gg0$. 

Assume $f$ is almost integral. 
To apply the infinite RS-algorithm to $f$, all we have to do is to apply the RS-algorithm to ``middle'' finite subsequences $f(n_-),..., f(n_+)$ of $f$ where $n_-\to-\infty$ and $n_+\to+\infty$.  
Note that for $n_-\ll0$ and $n_+\gg0$ the RS-algorithm will keep modifying only one of the tableaux in the outputs of previous steps. 
This follows from the fact for $n_-\ll0$ and $n_+\gg0$ the numbers $f(n_-), f(n_+)$ are in same integrality class.

Next, note that since $f$ is stably decreasing this modification will amount to adding the boxes $\begin{tabular}{|c|}\hline $f({n_{-}}-1)$\\\hline\end{tabular}$ to the left-hand side or $\begin{tabular}{|c|}\hline $f(n_{+}+1)$\\\hline\end{tabular}$ to the right-hand side of the first row. 
In this way, the output $RS(f)$ of our algorithm consists of several finite Young tableaux (possibly none) and one tableau whose first row is infinite and all other rows are finite. 
Denote the infinite tableau by $T_1$ and the other tableaux by $T_2,..., T_s$. Denote the first row of $T_1$ by ${\rm\overline{seq}}(f)$, $T_1$ without the first row by $T_1'$.  
Note that the identification of two-sided sequences with $\mathbb F^\mathbb Z$ is unique only up to a shift, and we fix this shift in such a way that $f(i)={\rm\overline{seq}}(f)_i$ for $i\ll0$.

Set ${\rm \underline{seq}}(f):={\rm seq}(T_1', T_2, T_3,..., T_s)$.  
Then we point out that~(\ref{Erslc}) holds also in this case where $r$ is the number of elements in ${\rm \underline{seq}}(f)$ and $i_1, i_2, ..., i_r$ satisfy~(\ref{Ccondi}).

\subsection{Admissible interchanges and Robinson-Schensted algorithm at infinity}
The next proposition is an infinite-dimensional version of the equivalence of claims 1) and 3) in Theorem~\ref{Tjo}.
\begin{proposition}For stably decreasing functions $f, f'$ from $\mathbb F^{\mathbb Z_{>0}}, \mathbb F^{\mathbb Z_{<0}}$ or $\mathbb F^{\mathbb Z}$, the following conditions are equivalent:

a) $f$ and $f'$ are connected by a series of admissible interchanges,

b) $RS(f)=RS(f')$.\end{proposition}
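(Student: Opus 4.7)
I plan to prove (a) $\Leftrightarrow$ (b) in detail for the left-infinite case $f, f' \in \mathbb{F}^{\mathbb{Z}_{<0}}$; the right-infinite case then follows by applying the involution $f \mapsto f^*$ introduced earlier, and the two-sided case is handled analogously using two-sided truncations $f|_{[n_-, n_+]}$ with $n_- \to -\infty$ and $n_+ \to +\infty$. In both directions the strategy is to reduce to a sufficiently large finite truncation of the sequences and invoke the finite case of Theorem~\ref{Tjo}.

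For (a) $\Rightarrow$ (b), I will induct on the length of the series, reducing to a single admissible interchange at some position $i$. Since both the swap and the admissibility conditions only reference the four consecutive positions $i-1, i, i+1, i+2$, any $n$ with $-n \le i-2$ that is also large enough for the first row of $RS(f|_{[-n,-1]})$ to have absorbed all the ``interesting'' values of $f$ will do. For such $n$ the restrictions $f|_{[-n,-1]}$ and $f'|_{[-n,-1]}$ differ by the same admissible interchange in the finite sense, and the finite case of Theorem~\ref{Tjo} gives $RS(f|_{[-n,-1]}) = RS(f'|_{[-n,-1]})$. Letting $n \to \infty$ and using the definition of the infinite RS-algorithm as a stabilization of finite truncations yields $RS(f) = RS(f')$.

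For the converse (b) $\Rightarrow$ (a), I first need to extract a tail-agreement statement. Because $f$ is stably decreasing, for $n \ll 0$ the value $f(n)$ is the $>_\mathbb{Z}$-largest element of $f$ in its integrality class, so it occupies the leftmost still-available position of the first row of $RS(f)$ and is never bumped by the finitely many non-tail values. With the normalization ${\rm\overline{seq}}(f)_n = f(n)$ for $n \ll 0$, the assumption ${\rm\overline{seq}}(f) = {\rm\overline{seq}}(f')$ then forces $f(n) = f'(n)$ for all $n \le -N_0$. Next, for $n$ sufficiently large the finite tableaux $RS(f|_{[-n,-1]})$ and $RS(f'|_{[-n,-1]})$ are both obtained from the infinite tableau $RS(f) = RS(f')$ by truncating its first row on the left, so they coincide. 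The finite version of Theorem~\ref{Tjo} then produces a series of admissible interchanges $\sigma_1, \ldots, \sigma_k$ converting $f|_{[-n,-1]}$ into $f'|_{[-n,-1]}$. Each $\sigma_j$ touches only four consecutive positions that lie inside $[-n,-1]$; consequently $\sigma_j$ is also admissible when interpreted in the infinite sequence, and its application to $f$ leaves the tail at positions $\le -n-1$ unchanged. The final infinite sequence coincides with $f'$ on $[-n,-1]$ by construction and with $f'$ on $(-\infty,-n-1]$ by the tail-agreement step.

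The main obstacle will be the two stabilization statements at the start of (b) $\Rightarrow$ (a): that the leftmost tail of the first row of $RS(f)$ coincides with the stably decreasing tail of $f$ itself, and that for all sufficiently large $n$ the finite tableau $RS(f|_{[-n,-1]})$ is $RS(f)$ with its first row truncated on the left. Both rest on a careful analysis of the total order $\prec$ defined in~(\ref{Eord}) and on the observation that only finitely many bumps occur during the infinite RS-procedure, namely those supported in the finite ``interesting'' window where $f$ fails to be strictly $>_\mathbb{Z}$-decreasing. Once these are in hand, the remaining reduction to the finite Theorem~\ref{Tjo} and the subsequent lifting of interchanges are routine, since conditions 1--3$'$ for admissibility depend only on the values $f(i-1), f(i), f(i+1), f(i+2)$, and these coincide in the finite restriction and in the infinite sequence.
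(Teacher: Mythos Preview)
Your plan is correct and supplies exactly the elaboration the paper omits: the paper's own proof is the single sentence ``Elementary and straightforward.'' The reduction to sufficiently long finite truncations together with the finite equivalence 2)$\Leftrightarrow$3) of Theorem~\ref{Tjo} (with $k=0$, which is forced since admissible interchanges preserve the multiset of entries) is the natural route, and your identification of the two stabilization facts as the only points requiring care is accurate.
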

\begin{proof}Elementary and straightforward.\end{proof}
\section{Two attributes of an ideal in ${\rm U}(\frak{sl}(\infty))$}
In this section, we introduce a sequence of algebraic varieties associated with an ideal $I\subset{\rm U}(\frak{sl}(\infty))$, as well as a c.l.s. associated with $I$. 

Recall that ${\rm Z}(\frak{sl}(n))$ stands for the centre of ${\rm U}(\frak{sl}(n))$. 
Denote by ${\rm Irr}_n$ the set of isomorphism classes of simple finite-dimensional ${\rm U}(\frak{sl}(n))$-modules. 
\begin{lemma}[{cf.~\cite[Subsection~3.1]{BJ}}]\label{Lprebor} Let $I_1, I_2$ be ideals of ${\rm U}(\frak{sl}(n))$. 
Then the following conditions are equivalent:

a) $I_1+I_2={\rm U}(\frak{sl}(n))$,

b) $1\in I_1+I_2$,

c) $(I_1\cap {\rm Z}(\frak{sl}(n)))+(I_2\cap {\rm Z}(\frak{sl}(n)))={\rm Z}(\frak{sl}(n))$,

d) $1\in (I_1\cap {\rm Z}(\frak{sl}(n)))+(I_2\cap {\rm Z}(\frak{sl}(n)))$.\end{lemma}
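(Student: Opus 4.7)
My plan is to dispose of three of the four equivalences as essentially formal and reserve the real work for the one nontrivial implication. The equivalences a)$\Leftrightarrow$b) and c)$\Leftrightarrow$d) are trivial, since an ideal of a unital ring is improper exactly when it contains $1$---applied once to ${\rm U}(\frak{sl}(n))$ and once to ${\rm Z}(\frak{sl}(n))$. The implication d)$\Rightarrow$b) is also automatic from the inclusions $I_j\cap{\rm Z}(\frak{sl}(n))\subseteq I_j$. The actual content of the lemma is therefore b)$\Rightarrow$d).

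For that implication, I will use an averaging argument driven by the adjoint action of the connected reductive algebraic group $G={\rm PGL}(n)$ on ${\rm U}(\frak{sl}(n))$ by algebra automorphisms. Since $G$ is reductive and ${\rm U}(\frak{sl}(n))$ is a locally finite rational $G$-module, complete reducibility yields a $G$-stable direct sum decomposition ${\rm U}(\frak{sl}(n))=\bigoplus_\lambda U_\lambda$ into isotypic components, in which the trivial isotype is precisely $U_0={\rm U}(\frak{sl}(n))^G={\rm Z}(\frak{sl}(n))$. Let $\pi\colon{\rm U}(\frak{sl}(n))\to{\rm Z}(\frak{sl}(n))$ denote the canonical $G$-equivariant projection onto this trivial isotype; by construction it fixes ${\rm Z}(\frak{sl}(n))$ pointwise.

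The crucial observation will be that any two-sided ideal $I\subset{\rm U}(\frak{sl}(n))$ is stable under conjugation by $G$ (two-sided ideals are closed under inner automorphisms), so $I$ is itself a rational $G$-submodule. Complete reducibility forces $I=\bigoplus_\lambda(I\cap U_\lambda)$, and hence $\pi(I)=I\cap{\rm Z}(\frak{sl}(n))$. Applying this to a presentation $1=x_1+x_2$ with $x_j\in I_j$ (which exists by b)), and using $\pi(1)=1$, I obtain $1=\pi(x_1)+\pi(x_2)$ with $\pi(x_j)\in I_j\cap{\rm Z}(\frak{sl}(n))$, which is exactly condition d).

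The only step demanding any justification at all is the existence and $G$-equivariance of the projection $\pi$, and this reduces to complete reducibility of rational $G$-modules for the reductive group $G$---a classical fact, not a genuine obstacle. The whole argument is thus a standard Reynolds-operator averaging, exactly in the spirit of~\cite[Subsection~3.1]{BJ}.
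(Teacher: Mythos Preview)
Your proof is correct and follows essentially the same line as the paper's. The paper phrases the averaging in terms of the adjoint $\frak{sl}(n)$-action (local finiteness and semisimplicity give the isotypic decomposition, and $(I_1+I_2)^{\frak g}=I_1^{\frak g}+I_2^{\frak g}$), whereas you pass to the connected reductive group $G={\rm PGL}(n)$ and its Reynolds projector; in characteristic~$0$ these are the same argument. One small quibble: your parenthetical ``two-sided ideals are closed under inner automorphisms'' is not quite the right justification for $G$-stability of $I$---an arbitrary algebra automorphism sends ideals to ideals but need not fix a given $I$; what you actually use is that $G$ is connected and the differential of its action is $\operatorname{ad}\frak{sl}(n)$, under which $I$ is stable since $[x,u]=xu-ux\in I$.
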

\begin{proof}It is clear that a) is equivalent to b), and that c) is equivalent to d). 
Hence it is enough to prove that b) is equivalent to d).

As an $\frak{sl}(n)$-module with the adjoint structure, ${\rm U}(\frak{sl}(n))$ is locally finite, and is an infinite direct sum of $\frak{sl}(n)$-isotypic components ${\rm U}(\frak{sl}(n))_\lambda$ where $\lambda$ runs over the entire set ${\rm Irr}_n$. 
Hence, $$I_1=\oplus_{\lambda\in {\rm Irr}_n} ({\rm U}(\frak{sl}(n))_\lambda\cap I_1),\hspace{10pt}I_2=\oplus_{\lambda\in {\rm Irr}_n} ({\rm U}(\frak{sl}(n))_\lambda\cap I_2),$$
and 
$$((I_1+I_2)\cap {\rm Z}(\frak{sl}(n)))=(I_1+I_2)^\frak g=(I_1)^\frak g+(I_2)^\frak g=({\rm Z}(\frak{sl}(n))\cap I_1)+({\rm Z}(\frak{sl}(n))\cap I_2),$$
where $*^\frak g$ stands for $\frak g$-invariants. 
This implies that b) is equivalent to d).
\end{proof}
\begin{lemma}\label{Lbor}Let $I$ be an ideal of ${\rm U}(\frak{sl}(n))$ and $L$ be a simple finite-dimensional $\frak{sl}(n)$-module. 
If \begin{equation}I\cap {\rm Z}(\frak{sl}(n))\subset ({\rm Z}(\frak{sl}(n))\cap{\rm Ann}_{{\rm U}(\frak{sl}(n))}L),\label{Ebor}\end{equation} then $I\subset {\rm Ann}_{{\rm U}(\frak{sl}(n))}L$.\end{lemma}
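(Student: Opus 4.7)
The plan is a short proof by contradiction leveraging Lemma~\ref{Lprebor} together with the fact that, for a simple finite-dimensional $\frak{sl}(n)$-module $L$, the annihilator ${\rm Ann}_{{\rm U}(\frak{sl}(n))}L$ is a maximal two-sided ideal. The latter holds because $L$ is finite-dimensional and simple, so by Jacobson density ${\rm U}(\frak{sl}(n))/{\rm Ann}_{{\rm U}(\frak{sl}(n))}L \cong {\rm End}_{\mathbb F}(L)$ is a simple algebra.

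First, I would suppose for contradiction that $I \not\subset {\rm Ann}_{{\rm U}(\frak{sl}(n))}L$. Then, since ${\rm Ann}_{{\rm U}(\frak{sl}(n))}L$ is maximal, the sum $I + {\rm Ann}_{{\rm U}(\frak{sl}(n))}L$ is a strictly larger ideal, hence must coincide with ${\rm U}(\frak{sl}(n))$. This puts us in condition a) of Lemma~\ref{Lprebor} (with $I_1 = I$ and $I_2 = {\rm Ann}_{{\rm U}(\frak{sl}(n))}L$).

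Next, I would invoke the equivalence a)$\Leftrightarrow$c) of Lemma~\ref{Lprebor} to deduce
$$(I\cap {\rm Z}(\frak{sl}(n)))+({\rm Ann}_{{\rm U}(\frak{sl}(n))}L\cap {\rm Z}(\frak{sl}(n)))={\rm Z}(\frak{sl}(n)).$$
Now I apply the hypothesis~(\ref{Ebor}): the first summand is contained in the second, so the left-hand side equals ${\rm Ann}_{{\rm U}(\frak{sl}(n))}L\cap {\rm Z}(\frak{sl}(n))$. Thus ${\rm Z}(\frak{sl}(n)) \subset {\rm Ann}_{{\rm U}(\frak{sl}(n))}L$, in particular $1 \in {\rm Ann}_{{\rm U}(\frak{sl}(n))}L$. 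Since $L \ne 0$, this is absurd, completing the argument.

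There is no real obstacle here: the only non-trivial input is the maximality of ${\rm Ann}_{{\rm U}(\frak{sl}(n))}L$ for simple finite-dimensional $L$, which is standard, and the fact that a central character is determined by its kernel in ${\rm Z}(\frak{sl}(n))$, packaged in Lemma~\ref{Lprebor}. The whole proof is just two lines once these ingredients are assembled.
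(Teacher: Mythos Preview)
Your proof is correct and follows essentially the same approach as the paper: both combine Lemma~\ref{Lprebor} with the maximality of ${\rm Ann}_{{\rm U}(\frak{sl}(n))}L$. Your contrapositive presentation is in fact slightly more streamlined---the paper routes through the fact (cited from~\cite{BJ}) that there is a unique maximal ideal of ${\rm U}(\frak{sl}(n))$ containing the central kernel ${\rm Z}(\frak{sl}(n))\cap{\rm Ann}_{{\rm U}(\frak{sl}(n))}L$, whereas you go directly from maximality of ${\rm Ann}_{{\rm U}(\frak{sl}(n))}L$ (via Jacobson density) to the contradiction, which suffices.
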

\begin{proof}Note that~(\ref{Ebor}) implies $$1\not\in ((I\cap {\rm Z}(\frak{sl}(n)))+({\rm Z}(\frak{sl}(n))\cap{\rm Ann}_{{\rm U}(\frak{sl}(n))}L))={\rm Z}(\frak{sl}(n))\cap{\rm Ann}_{{\rm U}(\frak{sl}(n))}L.$$ 
Moreover, it follows from Lemma~\ref{Lprebor} that $1\notin(I+{\rm Ann}_{{\rm U}(\frak{sl}(n))} L)$.
It is a well-known result that there exists a unique maximal ideal $m$ of ${\rm U}(\frak{sl}(n))$ containing ${\rm Z}(\frak{sl}(n))\cap {\rm Ann}_{{\rm U}(\frak{sl}(n))}L$, see~\cite[Subsection~1.1]{BJ}. 
Clearly, ${\rm Ann}_{{\rm U}(\frak{sl}(n))}L$ is maximal, and hence $$I+{\rm Ann}_{{\rm U}(\frak{sl}(n))}L\subset m={\rm Ann}_{{\rm U}(\frak{sl}(n))}L.$$ 
This implies $I\subset {\rm Ann}_{{\rm U}(\frak{sl}(n))}L$.\end{proof}
Fix $I\subset {\rm U}(\frak{sl}(\infty))$. 
For any $n\ge2$, we set
$$Q_n(I):=\{[L]\in {\rm Irr}_n\mid I\cap {\rm U}(\frak{sl}(n))\subset {\rm Ann}_{{\rm U}(\frak{sl}(n))}L\}.$$
The union of $Q_n(I)$ is a p.l.s., see Subsection~\ref{SScls}. % in the sense of~\cite{PP5}. 
Proposition~4.8 of~\cite{PP5} implies that therere exists a c.l.s. $Q(I)$ such that $Q(I)_n=Q_n(I)$ for $n\gg0$. 
Such c.l.s. $Q(I)$ is clearly unique.

A theorem of Harish-Chandra claims that ${\rm Z}(\frak{sl}(n))$ is isomorphic to the ${ S}_n$-invariants ${\rm S}(\frak h_n)^{S_n}$ in the symmetric algebra ${\rm S}(\frak h_n)^{S_n}$.  
Therefore the radical ideals of ${\rm Z}(\frak{sl}(n))$ are in one-to-one correspondence with the ${ S}_n$-invariant subvarieties $\frak h_n^*$. % (=the maximal spectrum of ${\rm S}(\frak h_n)$). 
Let $f\in\mathbb F^n$ be a function. 
Then the ideal $I(f)\cap {\rm Z}(\frak{sl}(n))$ is maximal; it corresponds to the $S_n$-orbit of the weight $\lambda_f+\rho_n$ where
$$\rho_n:=\lambda_{n, n-1,..., 1}.$$

Let $I$ be an ideal of ${\rm U}(\frak{sl}(\infty))$. 
Consider $I\cap {\rm Z}(\frak{sl}(n))$. Clearly, $I\cap {\rm Z}(\frak{sl}(n))$ is an ideal of ${\rm Z}(\frak{sl}(n))$ and $\sqrt{I\cap {\rm Z}(\frak{sl}(n))}$ is a radical ideal; 
it is identified with the $S_n$-stable subvariety ${\rm ZVar}_n(I)$ of $\frak h_n^*$.

The variety ${\rm ZVar}_n(I)$ and the set $Q_n(I)$ are related as follows.
\begin{proposition}\label{Pzvar} Let $I$ be a primitive ideal of ${\rm U}(\frak{sl}(\infty))$. Then

a) ${\rm ZVar}_n(I)$ equals the Zariski closure of the set $\{w(\lambda_f+\rho_n)\mid [L(f)]\in Q_n(I), w\in S_n\}$.

b) Let $f$ be a dominant function such that $\lambda_f+\rho_n\in {\rm ZVar}_n(I)$. Then $[L(f)]\in Q_n(I)$.\end{proposition}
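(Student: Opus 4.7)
Set $J_n := I \cap {\rm U}(\mathfrak{sl}(n))$, so by definition ${\rm ZVar}_n(I)$ is the vanishing locus in $\mathfrak{h}_n^*$ of $J_n \cap {\rm Z}(\mathfrak{sl}(n))$. The plan is to dispatch (b) first, since it follows directly from the apparatus already set up, and then use (b) to bootstrap (a).

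For (b): the hypothesis $\lambda_f + \rho_n \in {\rm ZVar}_n(I)$ says that $J_n \cap {\rm Z}(\mathfrak{sl}(n))$ is contained in the maximal ideal of ${\rm Z}(\mathfrak{sl}(n))$ determined by the $S_n$-orbit of $\lambda_f + \rho_n$. By the Harish-Chandra isomorphism this maximal ideal equals $I(f) \cap {\rm Z}(\mathfrak{sl}(n)) = {\rm Ann}_{{\rm U}(\mathfrak{sl}(n))} L(f) \cap {\rm Z}(\mathfrak{sl}(n))$. Dominance of $f$ makes $L(f)$ finite-dimensional, so Lemma~\ref{Lbor} applied to $J_n$ and $L=L(f)$ delivers $J_n \subset {\rm Ann}_{{\rm U}(\mathfrak{sl}(n))} L(f)$, i.e.\ $[L(f)] \in Q_n(I)$.

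The inclusion $\supseteq$ in (a) follows in the same spirit: $[L(f)]\in Q_n(I)$ forces $J_n\cap {\rm Z}(\mathfrak{sl}(n))\subset I(f)\cap {\rm Z}(\mathfrak{sl}(n))$, whose zero locus is precisely the single $S_n$-orbit $S_n\cdot(\lambda_f+\rho_n)$, so the closure of the union of all such orbits sits in ${\rm ZVar}_n(I)$. For the opposite inclusion, let $\mathcal{V}$ denote the closure appearing on the right-hand side of (a); then $\mathcal{V}\subset {\rm ZVar}_n(I)$, and by part (b) every dominant integral weight $\lambda_f+\rho_n$ that lies in ${\rm ZVar}_n(I)$ comes from some $[L(f)]\in Q_n(I)$ and hence lies in $\mathcal{V}$. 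By the $S_n$-invariance of $\mathcal{V}$, all $S_n$-translates of such points are in $\mathcal{V}$ as well.

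The main obstacle is then the Zariski density assertion: the $S_n$-orbits of the dominant integral points of ${\rm ZVar}_n(I)$ should be Zariski dense in ${\rm ZVar}_n(I)$. To obtain this density one appeals to the classification of primitive ideals of ${\rm U}(\mathfrak{sl}(\infty))$ from~\cite{PP5}: since $I$ is primitive it is integrable, so the associated c.l.s.~$Q(I)$ has the explicit Cartan-product form~(\ref{Eclsm}), and $Q(I)_n=Q_n(I)$ for all sufficiently large $n$ by~\cite[Proposition~4.8]{PP5}. The factorization~(\ref{Eclsm}) then exhibits a family of dominant integral highest weights in $Q_n(I)$ depending on several independent integer parameters, and a direct check shows that the $S_n$-orbits of the corresponding weights $\lambda_f+\rho_n$ are Zariski dense in ${\rm ZVar}_n(I)$ for those $n$. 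For the remaining small values of $n$ the equality $\mathcal{V}={\rm ZVar}_n(I)$ is then extracted from the large-$n$ case via the compatibility of $J_n\cap {\rm Z}(\mathfrak{sl}(n))$ under the embeddings ${\rm U}(\mathfrak{sl}(n))\hookrightarrow {\rm U}(\mathfrak{sl}(m))$.
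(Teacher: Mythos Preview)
Your proof of (b) is correct and coincides with the paper's. The inclusion $\supseteq$ in (a) is also fine.

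For the inclusion $\subseteq$ in (a), however, your argument has a genuine gap. You need the dominant integral points of ${\rm ZVar}_n(I)$ to be Zariski dense in ${\rm ZVar}_n(I)$, and you propose to verify this by exhibiting, via the Cartan-product form~(\ref{Eclsm}), a large family of weights in $Q_n(I)$. But the closure of any such family is, by construction, contained in $\mathcal V$; so what you can check directly is density in $\mathcal V$, not in ${\rm ZVar}_n(I)$. To upgrade this to density in ${\rm ZVar}_n(I)$ you would have to know ${\rm ZVar}_n(I)$ in advance, which is precisely the statement of (a). The ``direct check'' is therefore circular, and the compatibility argument for small $n$ inherits the same problem.

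The paper avoids this entirely by invoking a different consequence of~\cite{PP5}: not the Cartan-product classification, but \emph{local integrability} of a primitive ideal (\cite[Section~4]{PP5}), namely
\[
I\cap{\rm U}(\frak{sl}(n))=\bigcap_{[L^\alpha]\in Q_n(I)}{\rm Ann}_{{\rm U}(\frak{sl}(n))}L^\alpha.
\]
Intersecting with ${\rm Z}(\frak{sl}(n))$ gives $I\cap{\rm Z}(\frak{sl}(n))=\bigcap_\alpha\bigl({\rm Ann}\,L^\alpha\cap{\rm Z}(\frak{sl}(n))\bigr)$, an intersection of maximal ideals whose vanishing locus is exactly the closure of the orbits $S_n\cdot(\lambda_{f^\alpha}+\rho_n)$. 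This yields (a) in one line, with no density argument needed. The fix to your approach is simply to replace the Cartan-product input by this local-integrability input.
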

\begin{proof} If $I$ is primitive then $I$ is locally integrable, see~\cite[Section~4]{PP5}, which means that
$$I\cap {\rm U}(\frak{sl}(n))=\cap_{[L^\alpha]\in Q(I)_n}{\rm Ann}_{{\rm U}(\frak{sl}(n))} L^\alpha.$$
This implies 
$$I\cap {\rm Z}(\frak{sl}(n))=\cap_{[L^\alpha]\in Q(I)_n}({\rm Ann}_{{\rm U}(\frak{sl}(n))} L^\alpha\cap {\rm Z}(\frak{sl}(n)),$$
and a) follows.

We proceed to b).  
The condition $\lambda_f+\rho_n\in {\rm ZVar}_n(I)$ implies $$I\cap{\rm Z}(\frak{sl}(n))\subset ({\rm Z}(\frak{sl}(n))\cap{\rm Ann}_{{\rm U}(\frak{sl}(n))}L(f)).$$
To finish the proof we use Lemma~\ref{Lbor}.% and we use Lemma~\ref{Lprebor} in the proof of the first lemma.
\end{proof}

Consider $f\in\mathbb F^{\Theta}$ together with an arbitrary total order $\prec$ on $\Theta$. 
Put
$${\rm F}_{\prec, n}(f):=\{(f(i_1)-1,..., f(i_n)-n)\in\mathbb F^n\mid i_1\prec ...\prec  i_n\in\Theta\}.$$
\begin{lemma}\label{Ltop} We have
$$\lambda_g\in{\rm ZVar}_n(I_\prec(f))$$
for all $g\in{\rm F}_{\prec, n}(f)$.
\end{lemma}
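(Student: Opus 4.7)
The plan is to exhibit $\lambda_g$ as the Harish-Chandra central character of an explicit finite-rank subquotient of $L_\prec(f)$; it will then automatically lie in the zero locus of $I_\prec(f)\cap {\rm Z}(\frak{sl}(n))$. Concretely, given $g=(f(i_1)-1,\ldots,f(i_n)-n)$ with $i_1\prec\cdots\prec i_n$, I would set $F:=\{i_1,\ldots,i_n\}$ and use the order-preserving bijection $(F,\prec)\cong(\{1,\ldots,n\},<)$ to identify $\frak{sl}(F)\cong\frak{sl}(n)$, $\frak h_F\cong\frak h_n$, and $\frak b^F(\prec)$ with the upper-triangular Borel $\frak b_n$. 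Under this identification the restricted character $\lambda_{f|_F}$ becomes $\lambda_{(f(i_1),\ldots,f(i_n))}\in\frak h_n^*$.

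Next, let $v\in L_\prec(f)$ be a highest weight vector. Since $v$ is killed by every $e_{ij}$ with $i\prec j$ in $\Theta$, and in particular by all positive root vectors of $\frak b^F(\prec)$, the ${\rm U}(\frak{sl}(F))$-submodule $N:={\rm U}(\frak{sl}(F))\cdot v\subset L_\prec(f)$ is a $\frak b^F(\prec)$-highest weight module with highest weight $\lambda_{f|_F}$. By the standard fact that any highest weight ${\rm U}(\frak{sl}(n))$-module of highest weight $\mu$ has central character corresponding (under Harish-Chandra) to the $S_n$-orbit of $\mu+\rho_n$, the centre ${\rm Z}(\frak{sl}(F))$ acts on $N$ by the character associated to the point $\lambda_{f|_F}+\rho_n\in\frak h_n^*$. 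A direct computation (using $\rho_n(k)=-k$) shows that this weight, viewed as a function on $\{1,\ldots,n\}$, is precisely $k\mapsto f(i_k)-k$, i.e.\ $\lambda_{f|_F}+\rho_n=\lambda_g$.

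To conclude, since $I_\prec(f)$ annihilates $L_\prec(f)\supset N$, the intersection $I_\prec(f)\cap {\rm Z}(\frak{sl}(F))$ lies in the kernel of the central character of $N$; equivalently, every element of $I_\prec(f)\cap{\rm Z}(\frak{sl}(n))$ vanishes at $\lambda_g$. By the definition of ${\rm ZVar}_n(I_\prec(f))$ as the vanishing locus of $\sqrt{I_\prec(f)\cap{\rm Z}(\frak{sl}(n))}$, this gives $\lambda_g\in{\rm ZVar}_n(I_\prec(f))$, as required. The argument is essentially routine; the main point to verify is the compatibility of the Harish-Chandra shift with the paper's conventions for $\rho_n$, but this is straightforward once one observes that the two expressions $\rho_n=(-1,\ldots,-n)$ and $\rho_n=\lambda_{(n,n-1,\ldots,1)}$ used in the text differ by the scalar constant $n+1$ and hence define the same element of $\frak h_n^*$.
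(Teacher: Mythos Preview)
Your argument is correct and follows essentially the same route as the paper's proof: for $F=\{i_1,\ldots,i_n\}$ one exhibits a highest weight $\frak{sl}(F)$-module inside $L_\prec(f)$ (the paper phrases this as ``$L_\prec^F(f)$ is an $\frak{sl}(F)$-subquotient of $L_\prec(f)$'', while you explicitly take the submodule $N={\rm U}(\frak{sl}(F))\cdot v$ generated by the highest weight vector), and then reads off the central character to conclude $I_\prec(f)\cap{\rm Z}(\frak{sl}(n))$ vanishes at $\lambda_g$. Your version is a bit more explicit but the idea is identical.
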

\begin{proof} Consider a finite subset $F=\{i_1,..., i_n\}$ of $\Theta$. 
It is clear that $L_\prec^F(f)$ is an $\frak{sl}(F)$-subquotient of $L_\prec(f)$. 
This implies that $$I_\prec(f)\cap{\rm U}(\frak{sl}(n))\subset I_\prec(f),$$
and hence that
$$I_\prec(f)\cap{\rm Z}(\frak{sl}(n))\subset I_\prec(f)\cap{\rm Z}(\frak{sl}(n)).$$
The latter inclusion is equivalent to the desired statement.
\end{proof}
\begin{corollary}\label{Clam} Assume that $f$ and $f'$ are connected by a series of admissible interchanges. 
Then
$$\lambda_g\in{\rm ZVar}_n(I_\prec(f))$$
for all $g\in{\rm F}_{\prec, n}(f')$.
\end{corollary}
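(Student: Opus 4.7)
The plan is to deduce Corollary \ref{Clam} from Lemma \ref{Ltop} by establishing that the ideal $I_\prec(f)$ is invariant under admissible interchanges, i.e.\ $I_\prec(f)=I_\prec(f')$. Once this is proved, the conclusion is immediate: Lemma \ref{Ltop} applied with $f$ replaced by $f'$ gives $\lambda_g\in{\rm ZVar}_n(I_\prec(f'))$ for every $g\in{\rm F}_{\prec,n}(f')$, and the equality of the two ideals then yields ${\rm ZVar}_n(I_\prec(f))={\rm ZVar}_n(I_\prec(f'))$.

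To establish $I_\prec(f)=I_\prec(f')$, I would argue by induction on the length of the series of interchanges, which reduces the problem to the case where $f$ and $f'$ differ by a single admissible interchange swapping the values at two $\prec$-adjacent indices $s,t\in\Theta$. I would then pick a nested exhausting chain of finite subsets $F_1\subset F_2\subset\ldots\subset F_n\subset\ldots$ of $\Theta$ with $\cup_i F_i=\Theta$, chosen so that $F_1$ already contains $s$ and $t$ together with the finitely many $\prec$-neighbours of $\{s,t\}$ which appear in the admissibility conditions 1)--3$'$) of Subsection~\ref{SSkm}. For every such $F_i$, the restriction $f'|_{F_i}$ is obtained from $f|_{F_i}$ by the same swap at two $\prec$-adjacent indices of $F_i$, and the admissibility conditions transfer verbatim. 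Thus $f|_{F_i}$ and $f'|_{F_i}$ are connected by a (shifted) admissible interchange on the finite sequence, and Corollary~\ref{Cknu63} delivers $I_\prec(f)=I_\prec(f')$.

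The only step which needs genuine care, and which I view as the main obstacle, is the passage from an admissible interchange on $(\Theta,\prec)$ to an admissible interchange on each sufficiently large $F_i$. The issue is that the conditions 2), 2$'$), 3), 3$'$) reference values of $f$ at indices up to two $\prec$-steps away from the swapped pair, and a priori the $\prec$-neighbours of $s,t$ inside a finite subset $F_i$ need not coincide with their $\prec$-neighbours in all of $\Theta$. The remedy is to ensure that $F_1$ already absorbs a fixed finite $\prec$-neighbourhood of $\{s,t\}$ in $\Theta$; from that point on, the immediate $\prec$-predecessors and $\prec$-successors of $s$ and $t$ inside every $F_i$ agree with those in $\Theta$, so the five conditions are preserved on restriction. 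Everything else in the argument is routine bookkeeping.
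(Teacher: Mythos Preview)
Your argument is correct and is precisely the route the paper intends: the corollary is stated without proof, as an immediate consequence of Lemma~\ref{Ltop} applied to $f'$ together with the equality $I_\prec(f)=I_\prec(f')$ furnished by Corollary~\ref{Cknu63}. Your care with the $\prec$-neighbourhoods and your parenthetical ``(shifted)'' are both warranted---the paper's own applications of this corollary in Section~\ref{SDynk} invoke it for shifted admissible interchanges, which is what Corollary~\ref{Cknu63} actually requires.
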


\section{The main result for Dynkin Borel subalgebras}\label{SDynk}
Assume that $\frak b(\prec)$ is a Dynkin Borel subalgebra. 
This means that we can identify the ordered set $(\Theta, \prec)$ with one of the three ordered sets $(\mathbb Z_{>0}, <), (\mathbb Z_{<0}, <)$, $(\mathbb Z, <)$. 

Let $f\in\mathbb F^{\mathbb Z_{<0}}$, $\mathbb F^{\mathbb Z_{>0}}$, $\mathbb F^{\mathbb Z}$ be a locally constant function. 
Clearly, this is equivalent to
\begin{equation}\exists N\in\mathbb Z_{>0}: f(i)=f(i+1){\rm~for~all~}|i|\ge N.\label{Estc}\end{equation}
We fix such an $N$. Put
$$h^\pm(f):=\lim\limits_{n\to\pm\infty}f(n)$$
cf.~(\ref{Estc}). Note that if $(\Theta, \prec)\cong (\mathbb Z_{<0}, <)$ or $(\Theta, \prec)\cong (\mathbb Z_{>0}, <)$ then any locally constant function $f\in\mathbb F^\Theta$ is almost integral. 
If $f\in\mathbb F^\mathbb Z$ is almost integral and locally constant then $h^+(f)-h^-(f)$ is an integer. 

For a locally constant function $f\in\mathbb F^{\mathbb Z_{<0}}$, we set 
$$f^+:=(..., f(i)+i,..., f(-2)+2, f(-1)+1).$$
Then $f^+$ is a stably decreasing function in $\mathbb F^\Theta$. 
It is easy to see that $${\rm\overline{seq}}(f^+)_i=h^-(f)+|{\rm\underline{seq}}(f^+)|-i{\rm~for~}i\le -N$$
or, equivalently, $${\rm\overline{seq}}(f^+)^*_i=-h^-(f)-|{\rm\underline{seq}}(f^+)|-i{\rm~for~}i\ge N.$$
Hence the function\begin{equation}{\rm\overline{seq}}(f^+)^*+h^-(f)+|{\rm\underline{seq}}(f^+)|-\rho_{\mathbb Z_{>0}}\label{Eseqr},\end{equation}
where $h(f), |{\rm\underline{seq}}(f^+)|$ denote the constant functions, is nonincreasing and is stably equal to zero. 
The nonzero values of the function~(\ref{Eseqr}) form a partition which we denote by ${Y}(f)$.
\begin{proposition}\label{Prsz-}Let $f\in\mathbb F^{\mathbb Z_{<0}}$ be a locally constant function. 
Then $I_<(f)=I(r(f), 0, \emptyset, {Y}(f))$ where $r:=r(f):=|{\rm\underline{seq}}(f^+)|$.
\end{proposition}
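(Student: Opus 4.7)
The strategy is to compute the c.l.s.\ $Q(I_<(f))$ associated with $I_<(f)$ and show it equals ${\rm cls}(r(f),0,0,\emptyset,Y(f))$, from which the claim follows by the classification of primitive ideals in~\cite{PP5}. First I would reduce $f$ to a canonical representative: by formula~(\ref{Erslc}) and the proposition relating admissible interchanges with the RS-output at the end of Section~4, $f^+$ is connected by admissible interchanges to $\tilde f^+:={\rm ins}(i_1,\dots,i_r;{\rm \underline{seq}}(f^+),{\rm \overline{seq}}(f^+))$ for suitable $i_1<\dots<i_r$ satisfying~(\ref{Ccondi}). Taking the $i_k$'s inside a fixed finite window near $-1$ ensures that the corresponding $\tilde f(i):=\tilde f^+(i)+i$ agrees with $f$ outside that window, so Corollary~\ref{Cknu63} gives $I_<(\tilde f)=I_<(f)$. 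In this canonical form $\tilde f$ equals $h^-(f)$ throughout the stable left tail, followed by a finite region whose entries come from the lower tableaux $T_2,\dots,T_s$ of $RS(f^+)$ and from the deviation of ${\rm \overline{seq}}(f^+)$ from its stable pattern, which is exactly the partition $Y(f)$ repackaged by the normalisation ${\rm \overline{seq}}(f^+)^*+h^-(f)+r(f)-\rho_{\mathbb Z_{>0}}$.

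Next, I would show ${\rm cls}(r(f),0,0,\emptyset,Y(f))\subset Q(I_<(\tilde f))$. By Lemma~\ref{Lqch} and Proposition~\ref{Pzvar}(b), it suffices to realise $\lambda_{\mu^\alpha}+\rho_{2n}\in{\rm ZVar}_{2n}(I_<(\tilde f))$ for all $n\gg 0$, where $\mu^\alpha:=\gamma({\rm cls}(r(f),0,0,\emptyset,Y(f));n)$. Using Lemma~\ref{Ltop} (and Corollary~\ref{Clam} to adjust by further shifted admissible interchanges of $\tilde f$), I would choose $j_1<\dots<j_{2n}\in\mathbb Z_{<0}$ so that $(\tilde f(j_1)-1,\dots,\tilde f(j_{2n})-2n)$ matches $\mu^\alpha+\rho_{2n}$ up to an $S_{2n}$-action and a constant shift: stable indices supply the bulk constant and the $\mathcal L_{r(f)}^\infty$-contribution, while indices from the insertion block produce the row-drops $r_1-r_{k'}$ corresponding to the $\mathcal R_j^{r_j-r_{j+1}}$-factors of the unique factorisation~(\ref{Eclsm}). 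Reconciling the ``$-k$'' shift with the tallies coming from $\gamma$ on each basic c.l.s.\ factor is the main computational obstacle; it is a careful but direct bookkeeping exercise using the explicit structure of $\tilde f$ and the definition of $Y(f)$.

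Finally, the inclusion just established gives $I_<(f)\subset I(r(f),0,\emptyset,Y(f))$. To close the argument I would need the reverse inclusion, equivalently $Q(I_<(f))\subset{\rm cls}(r(f),0,0,\emptyset,Y(f))$. Since $I_<(f)$ is primitive and $Q(I_<(f))$ is irreducible by local integrability~\cite[Sec.~4]{PP5}, this amounts to ruling out any strict extension of ${\rm cls}(r(f),0,0,\emptyset,Y(f))$ inside $Q(I_<(f))$. A strict extension would require an extra basic factor $\mathcal L_*^*$, $\mathcal R_*^*$ or $\mathcal E^g$ in the unique factorisation~(\ref{Eclsm}); each such factor would force ${\rm ZVar}_{2n}(I_<(f))$ to contain highest weights which, by Proposition~\ref{Pzvar}(a) combined with an explicit description of ${\rm F}_{<,2n}(\tilde f)$, cannot be realised by any admissible-interchange-equivalent of $\tilde f$. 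Combining this rigidity with the inclusion above yields $Q(I_<(f))={\rm cls}(r(f),0,0,\emptyset,Y(f))$, completing the proof.
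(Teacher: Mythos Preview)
Your final paragraph contains a genuine gap that breaks the argument. The claim that $Q(I_<(f))$ is an irreducible c.l.s.\ is false whenever $r(f)\ge 1$: by \cite[Lemma~7.6c)]{PP1} one has $Q_n(I(r,0,\emptyset,Y))=\bigcup_{r'+r''=r}{\rm cls}(r',r'',0,\emptyset,Y)_n$, so $Q(I_<(f))$ has $r+1$ irreducible components. Consequently the inclusion $Q(I_<(f))\subset{\rm cls}(r(f),0,0,\emptyset,Y(f))$ you aim for simply does not hold, and no rigidity argument can establish it. Even setting this aside, your proposed rigidity mechanism has no foundation: Lemma~\ref{Ltop} and Corollary~\ref{Clam} only give \emph{lower} bounds on ${\rm ZVar}_n(I_<(f))$, and Proposition~\ref{Pzvar}(a) describes ${\rm ZVar}_n$ as a Zariski closure, so the fact that a weight does not lie in any ${\rm F}_{<,n}(\tilde f')$ does not prevent it from lying in ${\rm ZVar}_n$. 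The paper handles the inclusion $I(r(f),0,\emptyset,Y(f))\subset I_<(f)$ by an entirely different and much shorter route: pass to $f'$ as in~(\ref{Erslc}) and apply \cite[Lemma~5.4]{PP2} together with Proposition~\ref{Pfinf} to the inserted variables.

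Your middle paragraph is closer to the paper's argument for the inclusion $I_<(f)\subset I(r(f),0,\emptyset,Y(f))$, but it is missing the key step. Matching the specific weight $\gamma({\rm cls}(r,0,0,\emptyset,Y);n)+\rho_{2n}$ by choosing indices $j_1<\dots<j_{2n}$ in $\tilde f$ is not possible directly: the first $r$ coordinates of the weights you obtain from ${\rm F}_{<,2n}$ are constrained to a countable set depending on ${\rm\underline{seq}}(f^+)$ and integer insertion positions. The paper lets the insertion positions $i_1,\dots,i_r$ satisfying~(\ref{Ccondi}) vary, observes that the resulting weights $\lambda_{g'}$ form a Zariski-dense family in the first $r$ coordinates, and only then invokes Proposition~\ref{Pzvar}(b) on the closure to land in $Q_n(I_<(f))$. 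It also carries this out for every splitting $r'+r''=r$, not just $r'=r$, which is what \cite[Theorem~3.2]{PP2} requires.
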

\begin{proof} It is enough to prove

a) $I(r(f), 0, \emptyset, {Y}(f))\subset I_<(f)$,\\
and

b) $I_<(f)\subset I(r(f), 0, \emptyset, {Y}(f))$.

Statement a) is equivalent to
\begin{equation}I(r(f), 0, \emptyset, {Y}(f))\subset I_<(f')\label{Est1}\end{equation}
for some $f'$ such that $f$ and $f'$ are connected by a series of admissible interchanges. We pick $f'$ as in~(\ref{Erslc}) with $i_1,..., i_r$ satisfying~(\ref{Ccondi}). Then we apply~\cite[Lemma~5.4]{PP2} and Proposition~\ref{Pfinf} to the inserted variables. This shows a).

Theorem 3.2 of~\cite{PP2} implies that b) is equivalent to

b$'$) $Q_n(I(r(f), 0, \emptyset, {Y}(f)))\subset Q_n(I_<(f))$ for $n\gg0$.

According to~\cite[Lemma~7.6c)]{PP1} we have
$$Q_n(I(r(f), 0, \emptyset, {Y}(f)))=\cup_{r'+r''=r}{\rm cls}(r', r'', \emptyset, {Y}(f)).$$
Hence we need to prove that
$${\rm cls}(r', r'', 0, \emptyset, {Y}(f))_n\subset Q_n(I_<(f))$$
for any $n\gg0$ and all nonnegative integers $r', r''$ such that $r'+r''=r$. 

We fix $r', r''$ with $r'+r''=r$. 
Let the partition ${Y}(f)$ be $(l_1\ge...\ge l_s>0)$. 
We also fix $n\ge r+s$. 
Then Lemma~\ref{Ltop} asserts that
\begin{equation}\lambda_{g}\in {\rm ZVar}_n(I_<(f)),\label{Ela}\end{equation}
for any $\lambda_g\in F_{<, n}(f)$. 

We will now make use of Corollary~\ref{Cknu63} which allows us to replace $f$ in the formula~(\ref{Ela}) by any $f'$ which is connected with $f$ by a series of shifted admissible interchanges. 
%Note that $f$ and $f'$ are connected by a series of shifted admissible interchanges if and only if $f$ and $f'$ are connected by a series of admissible interchanges. 
Let $i_1,..., i_r$ be integers satisfying condition~(\ref{Ccondi}). 
Consider the subset \begin{equation}F_{i_1,..., i_r}:=\{i_1,..., i_r,-(n-r),..., -1\}\subset \Theta.\label{Efth}\end{equation} 
Define $f_{i_1,..., i_r}$ by the requirement that $(f_{i_1,..., i_r})^+$ equals the right-hand side of~(\ref{Erslc}) applied to $f^+, i_1,..., i_r$. 
The order of the elements of $F_{i_1,..., i_r}$ in (\ref{Efth}) allows us to consider $f_{i_1,..., i_r}|_{F_{i_1,..., i_r}}$ as a vector in $\mathbb F^n$. 
Since $f$ and $f_{i_1,..., i_r}$ are connected by a series of shifted admissible interchanges, Corollary~\ref{Clam} implies that
$$\lambda_{g'}\in {\rm ZVar}_n(I_<(f))={\rm ZVar}_n(I_<(f_{i_1,..., i_r})),$$where
$$g'(k)=\begin{cases}{\rm\underline{seq}}(f^+)_k+i_k-k&{\rm if~}1\le k\le r\\
{\rm\overline{seq}}(f^+)_{k-1-n+s}+(k-1-n+s)-k=h^-(f)+r-k&{\rm if~}r<k\le n-s\\
{\rm\overline{seq}}(f^+)_{k-1-n+s}+(k-1-n+s)-k=h^-(f)-r-k-l_{n+1-k}&{\rm if~}n-s<k\le n\\
\end{cases}.$$ 

For all choices of negative integers $i_1,..., i_r$ satisfying~(\ref{Ccondi}), the above weights $\lambda_{g'}$ form a subset of ${\rm ZVar}_n(I_<(f))$ whose Zariski closure contains the set $\lambda_{g''}$ for any $g''$ of the form
$$g''(k)=\begin{cases} i_k&{\rm if~}1\le k\le r\\
h^-(f)+r-k&{\rm if~}r<k\le n-s\\
h^-(f)+r-k-l_{n+1-k}&{\rm if~}n-s<k\le n\\
\end{cases}$$
where now $i_1,..., i_r\in\mathbb F$ are arbitrary. 
Therefore, Proposition~\ref{Pzvar} implies
\begin{equation}[L(i_1,..., i_{r'}, l_1, l_2,..., l_s, 0,..., 0, -j_{r''}, ..., -j_2, -j_1)]\in Q(I_<(f))_n\label{Eij}\end{equation}
for all positive integers $i_1\ge i_2\ge...\ge i_{r'}, j_1\ge...\ge j_{r''}$ such that $i_r\ge l_1, j_{r''}\ge0$. 
Consequently, \begin{equation}\gamma({\rm cls}(r', r'', 0, \emptyset, {Y}(f)); n)\in Q(I_<(f))_{2n}.\label{Est3}\end{equation}
Now Lemma~\ref{Lqch} implies b$'$), and the proof is complete.\end{proof}
%For $f\in\mathbb F^{\mathbb Z_{>0}}$ we set $X(f):=Y(f^*)$.
\begin{proposition}\label{Prsz+}Let $f\in\mathbb F^{\mathbb Z_{>0}}$ be a locally constant function. 
Then $I_<(f)=I(r(f^*), 0, {Y}(f^*), \emptyset).$
\end{proposition}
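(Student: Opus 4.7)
The plan is to deduce the result from Proposition~\ref{Prsz-} via the Chevalley involution $\sigma$ of $\frak{sl}(\Theta)$, defined by $\sigma(e_{ij})=-e_{ji}$ for $i\neq j$. The first step is to check that $\sigma$ carries $\frak b(\prec)$ to $\frak b(\prec^{\rm op})$, where $\prec^{\rm op}$ denotes the reversed order, and that $L_\prec(f)^\sigma\cong L_{\prec^{\rm op}}(-f)$; combined with ${\rm Ann}(L^\sigma)=\sigma^{-1}({\rm Ann}(L))$ and $\sigma^2={\rm id}$, this yields $\sigma(I_\prec(f))=I_{\prec^{\rm op}}(-f)$. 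With $(\Theta,\prec)\cong(\mathbb Z_{>0},<)$, the order $\prec^{\rm op}$ is isomorphic to $(\mathbb Z_{<0},<)$ via $n\mapsto -n$, and under this identification $-f\in\mathbb F^{\mathbb Z_{>0}}$ becomes precisely the sequence $f^*\in\mathbb F^{\mathbb Z_{<0}}$ appearing in the statement. Therefore Proposition~\ref{Prsz-} applied to $f^*$ gives
$$\sigma\bigl(I_<^{\mathbb Z_{>0}}(f)\bigr)=I_<^{\mathbb Z_{<0}}(f^*)=I\bigl(r(f^*),0,\emptyset,Y(f^*)\bigr).$$

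To conclude we need the symmetry $\sigma(I(r,g,X,Y))=I(r,g,Y,X)$, after which applying $\sigma$ to the display above produces the claimed formula. I would derive this symmetry from the c.l.s.\ factorization~(\ref{Eclsm}): since $\sigma$ sends every simple $\frak{sl}(n)$-module to its dual, it takes the fundamental module with highest weight $f_{k,n}$ to the one with highest weight $f_{n-k,n}$. This translates into $\sigma(\mathcal L_i)=\mathcal R_i$ and $\sigma(\mathcal L_i^\infty)=\mathcal R_i^\infty$, while $\mathcal E$ and $\mathcal E^\infty$ are $\sigma$-stable. Equivariance of the Cartan product then yields $\sigma({\rm cls}(r',r'',g,X,Y))={\rm cls}(r'',r',g,Y,X)$, and passing to annihilators --- using that $I({\rm cls}(r',r'',g,X,Y))$ depends only on $r'+r''$, $g$, $X$, $Y$, as recalled in Subsection~\ref{SScls} --- gives the required identity.

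The main obstacle is this last verification of how $\sigma$ acts on the parameters $(r,g,X,Y)$; although elementary along the c.l.s.\ route, it requires careful tracking of the identification of $\sigma$-twisted simple $\frak{sl}(n)$-modules with their duals. A more self-contained alternative, bypassing the symmetry argument altogether, is to repeat the proof of Proposition~\ref{Prsz-} verbatim using the right-infinite RS-algorithm in place of the left-infinite one, interchanging the roles of $\mathcal L$ and $\mathcal R$ throughout the application of Lemma~\ref{Lqch} and taking the mirror image of the insertion indices $i_1,\dots,i_r$ satisfying~(\ref{Ccondi}).
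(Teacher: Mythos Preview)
Your proposal is correct and follows essentially the same approach as the paper: the paper's proof uses the outer automorphism $e_{ij}\mapsto -e_{ji}$ to interchange $L_<(f)$ with $L_>(-f)\cong L_<(f^*)$ and to interchange $I(r(f^*),0,Y(f^*),\emptyset)$ with $I(r(f^*),0,\emptyset,Y(f^*))$, thereby reducing to Proposition~\ref{Prsz-}, and also notes (as you do) that one can alternatively repeat the proof of Proposition~\ref{Prsz-} with the obvious changes. Your treatment is slightly more detailed in justifying the symmetry $\sigma(I(r,g,X,Y))=I(r,g,Y,X)$ via the action of $\sigma$ on basic c.l.s., whereas the paper simply asserts the interchange of ideals.
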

\begin{proof} This proposition can be proved by repeating the proof of Proposition~\ref{Prsz-} and making some obvious changes. 
For a shorter proof, note that the outer automorphism 
$$e_{ij}\mapsto-e_{ji}$$
of $\frak{sl}(\infty)$ interchanges the simple modules $L_<(f)$ and $L_>(-f)\cong L_<(f^*)$ ($(\mathbb Z_{>0}, >)$ is isomorphic to $(\mathbb Z_{<0}, <)$ and thus $L_>(-f)\cong L_<(f^*)$), and interchanges the ideals $I(r(f^*), 0, {Y}(f^*), \emptyset)$ and $I(r(f^*), 0, \emptyset, {Y}(f^*))$. 
Therefore the statement also follows from Proposition~\ref{Prsz-}.
\end{proof}
Consider now the case $(\Theta, \prec)=(\mathbb Z, <)$. It is clear that if $f\in\mathbb F^{\mathbb Z}$ is a locally constant function, then
$$f^+:=(..., f(-1)+1, f(0), f(1)-1,..., f(i)-i,...)$$
is an element of $\mathbb F^{\mathbb Z}$ and is stably decreasing.
\begin{proposition}\label{Prsz}Let $f\in\mathbb F^{\mathbb Z}$ be a locally constant function. Let $r:=r(f)=|{\rm\underline{seq}}(f^+)|$.
Then $$I_<(f)=I(r, h^-(f)-h^+(f)+r, \emptyset, \emptyset).$$
\end{proposition}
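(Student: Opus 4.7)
The plan is to mirror the proof of Proposition~\ref{Prsz-}, splitting the statement into two inclusions (where $g:=h^-(f)-h^+(f)+r$):

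(a) $I(r,g,\emptyset,\emptyset)\subset I_<(f)$, and

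(b) $I_<(f)\subset I(r,g,\emptyset,\emptyset)$.

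First I would handle (a). By Corollary~\ref{Cknu63} it suffices to prove the inclusion for any $f'$ connected to $f$ by shifted admissible interchanges. I would pick $f'$ so that $(f')^+$ equals ${\rm ins}(i_1,\ldots,i_r;{\rm\underline{seq}}(f^+),{\rm\overline{seq}}(f^+))$ for some indices $i_1,\ldots,i_r$ satisfying~(\ref{Ccondi}); this is legitimate by the two-sided analogue of~(\ref{Erslc}). Sending the insertion positions all the way to one end of $\Theta$ and applying~\cite[Lemma~5.4]{PP2} together with Proposition~\ref{Pfinf} strips off the $r$ inserted spikes one at a time, each contributing a unit $\mathcal{L}$- or $\mathcal{R}$-factor to the rank, and reduces the task to the first-row-only function $\overline f$ defined by $\overline f(i):={\rm\overline{seq}}(f^+)_i+i$. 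A direct indexing check shows $h^-(\overline f)=h^-(f)$ and $h^+(\overline f)=h^+(f)-r$, so $\overline f$ has rank $0$ and Grassmann spread $h^-(\overline f)-h^+(\overline f)=g$. It then remains to identify $I(0,g,\emptyset,\emptyset)\subset I_<(\overline f)$, which is an explicit computation for a ``staircase'' (multi-step) function on $\mathbb{Z}$: $L_<(\overline f)$ is a tensor product of $g$ semiinfinite fundamental $\frak{sl}(\infty)$-modules, whose joint annihilator is precisely the c.l.s.-annihilator of $\mathcal{E}^g$.

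Next I would turn to (b). As in Proposition~\ref{Prsz-}, by \cite[Theorem~3.2]{PP2} it is enough to verify
$${\rm cls}(r',r'',g,\emptyset,\emptyset)_n\subset Q_n(I_<(f))$$
for $n\gg 0$ and all nonnegative $r',r''$ with $r'+r''=r$, and by \cite[Lemma~7.6c)]{PP1} this is the union of components of $Q_n(I(r,g,\emptyset,\emptyset))$. For each splitting $r=r'+r''$, I would choose an admissibly-interchange-equivalent function $f_{i_1,\ldots,i_r}$ with the first $r'$ spikes inserted on the left-infinite tail and the remaining $r''$ on the right-infinite tail; by Lemma~\ref{Ltop} and Corollary~\ref{Clam}, the weights $\lambda_{g'}$ obtained by restricting to a finite window of $2n$ consecutive indices lie in ${\rm ZVar}_n(I_<(f))$. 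Letting the spike values vary freely in $\mathbb F$ and taking Zariski closures supplies every dominant weight of the form required by $\mathcal L_{r'}^\infty\cdot\mathcal E^g\cdot\mathcal R_{r''}^\infty$, so Proposition~\ref{Pzvar}b) places the corresponding simple finite-dimensional modules in $Q_n(I_<(f))$. Finally, the inclusion $\gamma({\rm cls}(r',r'',g,\emptyset,\emptyset);n)\in Q(I_<(f))_{2n}$ combined with Lemma~\ref{Lqch} closes the argument.

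The hard part will be bookkeeping the central $\mathcal{E}^g$ block, absent from Proposition~\ref{Prsz-}: the dominant representative of each weight in ${\rm cls}(r',r'',g,\emptyset,\emptyset)$ now sits between $r'$ rising left-entries and $r''$ descending right-entries, with $g$ fundamental ``staircase'' entries in the middle coming from the integer gaps of ${\rm\overline{seq}}(f^+)$. Making the admissible-interchange manipulations respect these middle entries --- and, on the a)-side, correctly tracking the rightward shift of $\overline f$ caused by the $r$ bumps so that $h^+(\overline f)=h^+(f)-r$ --- is the technical heart of the proof.
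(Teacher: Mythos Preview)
Your proposal follows essentially the same strategy as the paper: both inclusions are established via the same ingredients (formula~(\ref{Erslc}) together with \cite[Lemma~5.4]{PP2} and Proposition~\ref{Pfinf} for~(a); the Zariski-closure argument using Lemma~\ref{Ltop}, Corollary~\ref{Clam}, Proposition~\ref{Pzvar}b) and Lemma~\ref{Lqch} for~(b)). The paper's treatment of~(b) is nearly identical to yours, though it places all $r$ insertion positions on the left tail and lets the \emph{window} be asymmetric (with $n-r'$ points on the left and $n-r''$ on the right), rather than splitting the spikes themselves; after passing to the Zariski closure this distinction disappears.

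One small correction in your handling of~(a): the assertion that $L_<(\overline f)$ is a tensor product of $g$ semiinfinite fundamental modules is not correct in general. For example, if $\overline f$ drops by $2$ at a single point, the restrictions to $\frak{sl}(n)$ are the Cartan components of $\Lambda^kV_n\otimes\Lambda^kV_n$, not the full tensor products, so $L_<(\overline f)$ is not such a tensor product. What you actually need, and what is immediate, is that $\overline f$ is nonincreasing with total integer drop $g$ (your indexing check $h^+(\overline f)=h^+(f)-r$ is correct), so each finite restriction of $L_<(\overline f)$ has highest weight lying in $(\mathcal E^g)_n$; this already gives $I(0,g,\emptyset,\emptyset)\subset I_<(\overline f)$. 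The paper in fact bypasses the reduction to $\overline f$ entirely and applies \cite[Lemma~5.4]{PP2} and Proposition~\ref{Pfinf} directly to $f'$, so your extra step, once corrected, is sound but unnecessary.
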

\begin{proof} The proof follows the same idea as the proof of Proposition~\ref{Prsz-}. 
Below we highlight the necessary changes.  

The inclusion 
$$I(r, h^-(f)-h^+(f)+r, \emptyset, \emptyset)\subset I_<(f)$$
is equivalent to
\begin{equation}I(r, h^-(f)-h^+(f)+r, \emptyset, \emptyset)\subset I_<(f')\label{Eext}\end{equation}
where $f'$ is as in~(\ref{Erslc}). 
By applying~\cite[Lemma~5.4]{PP2} and Proposition~\ref{Pfinf} to the inserted variables we establish~(\ref{Eext}).

Next,  Theorem 3.2 of~\cite{PP2} implies that the inclusion
$$I_<(f)\subset I(r, h^-(f)-h^+(f)+r, \emptyset, \emptyset)$$
is equivalent to the inclusions
$$Q_n(I(r, h^-(f)-h^+(f)+r, \emptyset, \emptyset))\subset Q_n(I_<(f)){\rm~for~}n\gg0.$$

As in the proof of Proposition~\ref{Prsz-}, it suffices to show that
$$\cup_{r'+r''=r}{\rm cls}(r', r'', h^-(f)-h^+(f)+r, \emptyset, \emptyset)\subset Q_n(I_<(f)){\rm~for~}n\gg0.$$

We fix nonnegative integers $r', r''$ with $r'+r''=r$. 
For $n\ge r$ Lemma~\ref{Ltop} asserts that
\begin{equation}\lambda_{g}\in {\rm ZVar}_n(I_<(f)),\label{Ela2}\end{equation}
for any $\lambda_g\in F_{<, n}(f)$. 
We now replace $f$ in formula~(\ref{Ela2}) by an appropriate $f'$ with $I_<(f)=I_<(f')$.
% which is connected with $f$ by a series of shifted admissible interchanges. 
%Note that $f$ and $f'$ are connected by a series of shifted admissible interchanges if and only if $f$ and $f'$ are connected by a series of admissible interchanges. 
Let $i_1,..., i_r$ be integers satisfying~(\ref{Ccondi}). 
Consider the subset \begin{equation}F_{i_1,..., i_r}:=\{i_1,..., i_r,-(n-r')-N,..., -1-N, 1+N, 2+N,..., (n-r'')+N\}\subset \Theta.\label{Efth2}\end{equation} 
Define $f_{i_1,..., i_r}$ by the requirement that $(f_{i_1,..., i_r})^+$ equals the right-hand side of~(\ref{Erslc}) applied to $f^+, i_1,..., i_r$. 
The order of the elements of $F_{i_1,..., i_r}$ in (\ref{Efth2}) allows us to consider $f_{i_1,..., i_r}|_{F_{i_1,..., i_r}}$ as a vector in $\mathbb F^{2n}$. 
Then $I_<(f)=I_<(f_{i_1,..., i_r})$. 
Moreover, Corollary~\ref{Clam} implies that
$$\lambda_{g'}\in {\rm ZVar}_{2n}(I_<(f))={\rm ZVar}_{2n}(I_<(f_{i_1,..., i_r})),$$where
$$g'(k)=\begin{cases}{\rm\underline{seq}}(f^+)_k+i_k-k&{\rm if~}1\le k\le r\\
{\rm\overline{seq}}(f^+)_{k-1-N-n-r''}+(k-1-N-n-r'')-k=h^-(f)-k&{\rm if~}r<k\le n+r''\\
{\rm\overline{seq}}(f^+)_{k-n-r''+N}+(k-n-r''+N)-k=h^+(f)-k&{\rm if~}n+r''<k\le 2n\\
\end{cases}.$$

For all integers $i_1,..., i_r$ satisfying~(\ref{Ccondi}), the above weights $\lambda_{g'}$ form a subset of ${\rm ZVar}_{2n}(I_<(f))$ whose Zariski closure contains the set $\lambda_{g''}$ for any $g''$ of the form
$$g''(k)=\begin{cases} i_k&{\rm if~}1\le k\le r\\
{\rm\overline{seq}}(f^+)_k-k=h^-(f)-k&{\rm if~}r<k\le n\\
{\rm\overline{seq}}(f^+)_k-k=h^+(f)-k&{\rm if~}n<k\le 2n\\
\end{cases}$$
where now $i_1,..., i_r\in\mathbb F$ are arbitrary. 
Therefore Proposition~\ref{Pzvar} implies
$$[L(i_1,..., i_{r'}, \underbrace{h^-(f),..., h^-(f)}_{(n-r')-{\rm times}}, \underbrace{h^+(f),..., h^+(f)}_{(n-r'')-{\rm times}}, -j_{r''},..., j_{1})]\in Q(I_<(f))_n.$$
for all positive integers $i_1\ge i_2\ge...\ge i_{r'}, j_1\ge...\ge j_{r''}$ such that $i_{r'}\ge h^-(f), -j_{r''}\le h^+(f)$. Consequently, $$\gamma({\rm cls}(r', r'', 0, \emptyset, \emptyset); n)\in Q(I_<(f))_{2n}.$$
We complete the proof by applying Lemma~\ref{Lqch}.\end{proof}

\section{The main result}\label{Smain}
We are now ready to state the general result. 
Consider a given simple highest weight module $L_\prec(f)$ where $f\in\mathbb F^\Theta$ is a $\prec$-locally constant and almost integral function. 
Let $\prec_f$ be a total order on $\Theta$ such that $\prec_f$ is $f$-equivalent to $\prec$, and $\prec_f$ is $f$-preferred with respect to a partition $\Theta_1\sqcup...\sqcup\Theta_t$ of $\Theta$, see Subsection~\ref{SShwuan}. 
%We have isomorphisms of ordered sets
%$$(\Theta_1, \prec_f)\cong (\mathbb Z_{>0}, <), (\Theta_2, \prec_f)\cong...\cong (\Theta_{t-1}, \prec_f)\cong (\mathbb Z, <), (\Theta_t, \prec_f)\cong (\mathbb Z_{<0}, <).$$
Propositions~\ref{Prsz-}, \ref{Prsz+}, \ref{Prsz} imply that
$$I_{\prec_f}^{\Theta_1}(f)=I(r_1, 0, {X}, \emptyset),\hspace{10pt} I_{\prec_f}^{\Theta_t}(f)=I(r_t, 0, \emptyset, {Y}),$$
$$I_{\prec_f}^{\Theta_i}(f)=I(r_i, g_i, \emptyset, \emptyset), 1<i<t$$
for appropriate nonegative integers $r_1,..., r_t, g_1,..., g_t$ and Young diagrams $X, Y$.

%Let $f^i$ be the restriction of $f$ to $\Theta_i$. Let $$r_1:=r(f_1), r_2:=r(f^2), r_3:=r(f^3),..., r_t:=r(f_t^*).$$

% describe the algorithm for computing a given highest weight $\frak{sl}(\infty)$-module. 
%The first step is to replace the total order $\prec$ on $\Theta$ corresponding to $\frak b$ by an $f$-preferred total order $\prec_f$. This total order $\prec_f$ determines an $f$-preferred partition $\Theta_1\prec_f..\prec_f \Theta_t$. Then by 

\begin{theorem}\label{T71} We have $I_\prec(f)=I(r_1+...+r_t, g_2+...+g_{t-1}, {X}, {Y})$.\end{theorem}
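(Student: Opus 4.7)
The plan is to reduce to the three Dynkin-case computations of Section~\ref{SDynk} (Propositions~\ref{Prsz-}, \ref{Prsz+}, \ref{Prsz}) and show that the corresponding annihilators combine additively. By Corollary~\ref{Crper} we have $I_\prec(f)=I_{\prec_f}(f)$, so we may assume $\prec=\prec_f$ is $f$-preferred with partition $\Theta=\Theta_1\sqcup\ldots\sqcup\Theta_t$. The two inclusions between $I_{\prec_f}(f)$ and $I(r,g,X,Y)$, where $r:=r_1+\ldots+r_t$ and $g:=g_2+\ldots+g_{t-1}$, will then be established separately.

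For $I(r,g,X,Y)\subset I_{\prec_f}(f)$, I would choose exhausting chains of finite subsets $F_n^{(i)}\subset\Theta_i$ and set $F_n:=\bigsqcup_i F_n^{(i)}$; by Proposition~\ref{Pfinf} it suffices to obtain the containment at the finite level $I^{F_n}_{\prec_f}$ for $n\gg 0$. On each piece $F_n^{(i)}$, the insertion identity~(\ref{Erslc}) with indices satisfying~(\ref{Ccondi}) turns $f|_{F_n^{(i)}}$ into a form where~\cite[Lemma~5.4]{PP2} applies, exactly as in the proofs of Propositions~\ref{Prsz-}, \ref{Prsz+}, \ref{Prsz}. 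The $f$-preferred property ensures that no shifted admissible interchange crosses the boundary between $\Theta_i$ and $\Theta_{i+1}$, so the three constructions run in parallel and concatenate to yield an $f_n'$ on $F_n$ connected to $f|_{F_n}$ by shifted admissible interchanges, delivering the required finite-level containment.

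For the converse inclusion, Theorem~3.2 of~\cite{PP2} reduces the task to showing $Q_{2n}(I(r,g,X,Y))\subset Q_{2n}(I_{\prec_f}(f))$ for $n\gg 0$. By~\cite[Lemma~7.6c)]{PP1} this breaks into the summands ${\rm cls}(r',r'',g,X,Y)$ for $r'+r''=r$. Fix such a decomposition, further split $r'=\sum r_i'$ and $r''=\sum r_i''$ with $r_i'+r_i''=r_i$, and distribute $g=\sum_{1<i<t}g_i$ across the intermediate pieces. Choose a large finite subset $F\subset\Theta$ whose intersection with each $\Theta_i$ is modeled on the set $F_{i_1,\ldots,i_{r_i}}$ from the proof of the corresponding Proposition~\ref{Prsz-}/\ref{Prsz+}/\ref{Prsz}. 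Then Corollary~\ref{Clam}, applied to $f$ after admissible interchanges performed independently within each $\Theta_i$, produces weights in ${\rm ZVar}_{2n}(I_{\prec_f}(f))$ whose Zariski closures contain $\lambda_{g'}$ for every $g'$ representing an element of ${\rm cls}(r',r'',g,X,Y)_{2n}$. Proposition~\ref{Pzvar}b) upgrades these to c.l.s.-memberships, and Lemma~\ref{Lqch} closes the argument. The principal obstacle is the global combinatorial bookkeeping: verifying that the indices $(i_1,\ldots,i_r)$ can be chosen simultaneously across all $t$ pieces so as to satisfy~(\ref{Ccondi}) inside each piece, and that the resulting Zariski-closure procedure produces Young diagrams only from the end pieces $\Theta_1,\Theta_t$ and Grassmann contributions only from the interior pieces $\Theta_2,\ldots,\Theta_{t-1}$.
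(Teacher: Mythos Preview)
Your proposal is correct and follows essentially the same approach as the paper's own proof: both establish the two inclusions separately, using \cite[Lemma~5.4]{PP2} together with Proposition~\ref{Pfinf} and the insertion identity~(\ref{Erslc}) for $I(r,g,X,Y)\subset I_{\prec_f}(f)$, and the ${\rm ZVar}$/c.l.s. machinery (Theorem~3.2 of~\cite{PP2}, \cite[Lemma~7.6c)]{PP1}, Corollary~\ref{Clam}, Proposition~\ref{Pzvar}, Lemma~\ref{Lqch}) for the reverse inclusion, with the indices $i_1,\ldots,i_r$ chosen independently on each $\Theta_i$. The paper's proof is in fact a one-paragraph sketch pointing back to Propositions~\ref{Prsz-}--\ref{Prsz}, and your write-up spells out precisely the bookkeeping it leaves implicit.
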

\begin{proof} The proof follows the same lines as the proofs of Propositions~\ref{Prsz-},~\ref{Prsz+},~\ref{Prsz}. 
One first proves the inclusion
$$I(r_1+...+r_t, g_2+...+g_{t-1}, X, Y)\subset I_\prec(f)$$
by the same argument as above.

For the opposite inclusion, one considers functions $f_{i_1,..., i_r}$ which, restricted to $\Theta_i$, coincide with the corresponding functions constructed in the proofs of Propositions~\ref{Prsz-},~\ref{Prsz+},~\ref{Prsz}. 
This means that the integers $i_1,..., i_r$ arise as a union of independently chosen $t$-subsets of integers. 
With this modification, the argument goes though almost verbatim.
\end{proof}
\section{Examples}
\subsection{}Assume that $(\Theta, \prec)=(\mathbb Z_{>0}, <)$. Fix $\alpha\in\mathbb F, n\in\mathbb Z_{\ge1}$ and consider the $<$-locally constant function
\begin{equation}f:=(\underbrace{-1,..., -1}_{(n-1){\rm~times}}, \alpha, 0, 0, 0, 0,...)\label{Ef-1a0l}.\end{equation}
Then
$$h(f)=0,\hspace{10pt} f^+=(-1, -2, -3,..., -n, \alpha-n, -(n+1), -(n+2),...),$$
$$(f^+)^*=(..., (n+2), (n+1), n-\alpha, n,..., 3, 2, 1),$$
$${\rm\underline{seq}}((f^+)^*)= n-\alpha,\hspace{10pt}{\rm\overline{seq}}((f^+)^*)=(..., (n+2), (n+1), n, ..., 1).$$
Hence $r=1, {Y}=\emptyset,$ and 
$$I_<(f)=I(1, 0, \emptyset, \emptyset).$$

Assume next that $(\Theta, \prec)=(\mathbb Z_{<0}, <)$. 
Fix $\alpha\in\mathbb F, n\in\mathbb Z_{\ge1}$ and consider
\begin{equation}f:=(..., -1, -1, \alpha, \underbrace{0, ..., 0}_{(n-1)_{\rm~times}}),\label{Ef-1a0r}\end{equation}
Here
$$h(f)=-1,\hspace{10pt} f^+=(..., n, n-1, (n-1)+\alpha, (n-2),..., 2, 1, 0),$$
$${\rm\underline{seq}}((f^+)^*)= n-1+\alpha,\hspace{10pt}{\rm\overline{seq}}((f^+)^*)=(..., n, (n-1), (n-2), ..., 0).$$
Hence $r=1, {X}=\emptyset$, and again $$I_<(f)=I(1, 0, \emptyset, \emptyset).$$

Finally let $(\Theta, \prec)=(\mathbb Z, <)$. Fix $\alpha\in\mathbb F$ and consider
\begin{equation}f:=(..., -1, -1, \alpha, 0, 0,...)\label{Ef-1a0t}\end{equation}
where $f(n)=\alpha$. 
We have
$$h(f)=-1,\hspace{10pt} f^+=(..., 2-n, 1-n, 0-n, \alpha-n, -1-n, -2-n,...),$$
$${\rm\underline{seq}}((f^+)^*)= \alpha-n,\hspace{10pt}{\rm\overline{seq}}(f^+)=(..., 2-n, 1-n, 0-n, -1-n, -2-n, ...).$$
Hence $r=1, g=h(f)+r=0,$ and again $$I_<(f)=I(1, 0, \emptyset, \emptyset).$$

The above computations show that the simple highest weight modules with highest weights~(\ref{Ef-1a0l}),~(\ref{Ef-1a0r}),~(\ref{Ef-1a0t}) share the same annihilator in ${\rm U}(\frak{sl}(\infty))$, namely the primitive ideal $I(1, 0, \emptyset, \emptyset)$. 
Moreover, in~\cite{GP} it is proved that any simple nonintegrable highest weight $\frak{sl}(\infty)$-module with bounded weight multiplicities is isomorphic to one of the above highest weight modules. 
These highest weight modules are multiplicity free over $\frak h$. 
\subsection{}
Let $L$ be a direct limit of exterior powers $\Lambda^{k_n}(V_n)$ of $V_n$ where $V_n$ is a defining $\frak{sl}(n)$-module, and $k_n$ is a nondecreasing sequence satisfying $1< k_n<n$ and such that $k_{n+1}=k_n$ or $k_{n+1}=k_n+1$.
Assume that $$\lim k_n=\lim(n-k_n)=\infty.$$
Then one can show~\cite{GP} that $L$ is a highest weight $\frak{sl}(\infty)$-module for an appropriately chosen Borel subalgebra $\frak b(\prec)$, and that the highest weight $f\in\mathbb F^\Theta$ of $L$ can be chosen to take only values 1 and 0. 
Moreover, the Borel subalgebra $\frak b(\prec)$ can be chosen to be a Dynkin Borel subalgebra such that $(\Theta, \prec)=(\mathbb Z, <)$. 
Then, by Proposition~\ref{Prsz}, $$I_\prec(f)=I(0, 1, \emptyset, \emptyset).$$ 
In Section~\ref{Sback} we referred to $L$ as a semiinfinite fundamental representation of $\frak{sl}(\infty)$.  
\subsection{}
Consider the case when $\frak b$ is ideal with respect to a partition $\Theta_1\prec \Theta_2\prec \Theta_3$ of $S$ such that $\Theta_2$ is empty. 
Assume that $f$ has finitely many nonzero coordinates. 
The construction at the end of Subsection~\ref{SShwuan} provides an $f$-preferred partition $\Theta_1'\prec \Theta_2'\prec \Theta_3'$ of $\Theta$ for which $f|_{\Theta_2'}$ equals zero. 
This implies that
$$I_\prec^{\Theta_1'}(f)=I(r_1, 0, {X}, \emptyset),\hspace{10pt}I_\prec^{\Theta_2'}(f)=I(0, 0, \emptyset, \emptyset),\hspace{10pt}I_\prec^{\Theta_3'}(f)=I(r_3, 0, \emptyset, {Y})$$
for some $r_1, r_2\in\mathbb Z_{\ge0}$ and some Young diagrams ${X}, {Y}$. Therefore, $I_\prec(f)=I(r_1+r_3, {X}, {Y})$ by Theorem~\ref{T71}. 
It is easy to check that the primitive ideals obtained in this way run over all ideals of the form $I(r, 0, {X}, {Y})$ for arbitrary $r, {X}, {Y}$. 
The case when $r_1=r_3=0$ corresponds to the case of the simple tensor modules $V_{{X}, {Y}}$ mentioned in Section~\ref{Sback}.

\end{document}